\title{Integral action feedback design for conservative abstract systems in the presence of input nonlinearities}
\author{Ling Ma, Vincent Andrieu, Daniele Astolfi, 
Mathieu Bajodek, Cheng-Zhong Xu and Xuyang Lou
\thanks{
L. Ma and X. Lou are with School of IoT Engineering, Jiangnan University, Wuxi 214122, China
({\tt \small  lingma@stu.jiangnan.edu.cn, Louxy@jiangnan.edu.cn}).}
\thanks{V. Andrieu, D. Astolfi, M. Bajodek and C. Xu are with 
Universit\'e Claude Bernard Lyon 1, CNRS, LAGEPP UMR 5007, 43 boulevard du 11 novembre 1918, F-69100, Villeurbanne, France 
({\tt \small name.surname@univ-lyon1.fr}).
}
\thanks{This work was supported by the China Scholarship Council (No. 202206790086) and 
by the French Grant ODISSE ANR-19-CE48-0004-01 and
ALLIGATOR ANR-22-CE48-0009-01.}
}
\newcommand{\cA}{\mathcal A}
\newcommand{\cB}{\mathcal B}
\newcommand{\cC}{\mathcal C}
\newcommand{\cF}{\mathcal F}
\newcommand{\cG}{\mathcal G}
\newcommand{\cI}{\mathcal I}
\newcommand{\cK}{\mathcal K}
\newcommand{\cL}{\mathcal L}
\newcommand{\cM}{\mathcal M}
\newcommand{\cN}{\mathcal N}
\newcommand{\cP}{\mathcal P}
\newcommand{\cX}{\mathcal X}
\DeclareMathOperator{\1}{\mathbf{1}}
\DeclareMathOperator{\Id}{I}
\theoremstyle{plain}
\newtheorem{theorem}{Theorem}
\newtheorem{proposition}{Proposition}
\newtheorem{assumption}{Assumption}
\newtheorem{lemma}{Lemma}
\newtheorem{definition}{Definition}
\newtheorem{assumption*}{Assumption}
\def\RR{{\mathbb R}}    
\def\RR{{\mathbb R}}    
\def\cX{\mathcal{X}}
\def\cK{\mathcal{K}}
\def\cI{\mathcal{I}} 
\def\LL{\mathcal {L}}
\newcommand{\ddt}{{\tfrac{{\rm d}}{{\rm d}t}}}
\def\dd{{\rm d}\hbox{\hskip 0.5pt}}
\begin{document}

\maketitle

\begin{abstract}
In this article, we present a stabilization feedback law  with integral action for conservative abstract linear systems subjected to actuator nonlinearity. Based on the designed control law, we first prove the well-posedness and global asymptotic stability of the origin of the closed-loop system by constructing a weak Lyapunov functional. Secondly, as an illustration, we apply the results to a wave equation coupled with an ordinary differential equation (ODE) at the boundary. Finally, we give the simulation results to illustrate the effectiveness of our method. 
\end{abstract}

\begin{IEEEkeywords}
Stabilization, integral action, nonlinear semigroups, abstract control systems. 
\end{IEEEkeywords}

\maketitle

\section{Introduction}
In recent years, there has been a growing emphasis on studying the stabilization problem of infinite-dimensional systems subjected to nonlinearities in control inputs. Such nonlinearities impose limitations on actuators which is common in various physical systems and can lead to undesirable outcomes if overlooked.
Among the large literature on this topic, 
we recall, 
in the context of stabilization, 
the series of works
\cite{prieur2016wave, marx2017cone, jacob2020remarks, chitour2021one,marx2021forwarding}.
 In \cite{prieur2016wave}  and \cite{jacob2020remarks}, the application of the Lyapunov theory is the primary analytical approach for addressing the stability aspects within the respective problems. In \cite{chitour2021one}, the authors transform the wave equation into a discrete-time dynamical system, which allows for the analysis of the stabilization. Additionally, both \cite{marx2017cone} and \cite{marx2021forwarding} tackle the stabilization problem of infinite-dimensional systems by leveraging the dissipativity property.
The problem of input nonlinearities 
becomes however even more critical  when an 
integral action is added in the loop.

The use of integral action 
provide robustness
tracking properties and is well established 
in the context of infinite-dimensional systems, 
see, e.g. 
\cite{pohjolainen1982robust,xu1995robust,dos2008boundary,paunonen2015controller,Terrand2020,coron2019pi,vanspranghe2023output,balogoun2023iss,astolfi2022global,lorenzetti2023saturating}.
The feedback design is typically based 
on a first preliminary feedback 
ensuring stability of the controlled system, 
and then a second feedback employing a 
integrator gain which has to be chosen 
small enough. In this context, a lot of effort
has been put in characterizing 
a bound for the regulating gain. 
Such a characterization has been 
done by employing transfer function
approaches, e.g. 
\cite{xu1995robust,logemann1998integral,logemann1999integral,paunonen2015controller}, 
 forwarding-Lyapunov based approaches, 
see, e.g. \cite{terrand2019regulation,astolfi2022global,balogoun2023iss}, and more recently
singular perturbations
\cite{lorenzetti2023saturating}.

In this article, 
we study the use of integral action 
feedback for abstract linear systems
subject to input nonlinearities.
Existing methodologies 
(see, e.g. \cite{logemann1998integral,logemann1999integral,lorenzetti2023saturating}) 
rely 
on the property that before adding the integral gain, the controlled system needs
to be already open-loop stable or stabilized
via a preliminary feedback.
However, as explained in \cite{astolfi2022global}, 
the use of preliminary-feedback 
is unfeasible in the case of bounded nonlinearity.
The goal of this paper is therefore
to relax such an open-loop stability 
assumption and to consider a wider class of 
systems for which integral-action controller
can be employed. 
To this end, we focus on abstract-linear
systems in which the open-loop dynamics
is only conservative, but not necessarily asymptotically stable, and 
we propose a new feedback controller
solving the aforementioned problem.
The solution is mainly inspired by the work 
\cite{slemrod1989feedback}, which involves the use of  weak Lyapunov functionals, and the 
forwarding approach recently extended
to infinite-dimensional systems 
(see, e.g. \cite{Terrand2020,marx2021forwarding,vanspranghe2023output,lorenzetti2023saturating}).

The rest of this article is organized as follows.
In Sections~\ref{sec:problem}-\ref{sec:proofs}, we examine the scenario of an abstract system and provide the general sufficient conditions that enable us to introduce a control law solving the stabilization of the origin problem. One crucial condition is that the control operator
is bounded.
However, in Section~\ref{sec:general}, we illustrate through an example that the general approach developed in this article allows  to consider systems with boundary control (i.e. 
unbounded control operators). This is demonstrated using the example of a wave equation.

\emph{Notations.} $\RR$ denotes the set of real numbers and $\RR_+=[0,\infty)$. $|\cdot|$ stands for the Euclidean norm, and $\Id$ represents the identity matrix. 
Given a Hilbert space $\cX$, we denote with $\cI_\cX$  the identity operator. Let $H^1(0, 1) \subset L^2(0, 1)$ be the Hilbert space of real-valued, absolutely continuous functions defined over the interval $[0, 1]$, with square-integrable derivatives. Given two Hilbert space $\cX_1$ and $\cX_2$, $\cL(\cX_1,\cX_2)$ represents the class of linear bounded operators from $\cX_1$ to $\cX_2$, and $\cL(\cX_1) = \cL(\cX_1,\cX_1)$. Given an operator $\cA$ on a Hilbert space $\cX$, 
$D(\cA)$ denotes its domain, 
$\cA^\ast$  its adjoint operator and $\rho(\cA)$  its resolvent set.

\section{Problem statement and preliminaries}
\label{sec:problem}
We define a Hilbert space $\cX$ equipped with a scalar product $\langle \cdot, \cdot \rangle_\cX$ and the induced norm $\|\cdot \|_\cX$. We focus on the following abstract control framework:
\begin{equation}\label{abstractform}
\left\{
\begin{aligned}
\ddt x &=\cA x  + \cB \psi(u(t)), \\
\ddt z &=\cC x,  \\
x(0) &=x_0, z(0)=z_0,
\end{aligned}
\right.
\end{equation} 
where $\cA: D(\cA) \subset \cX \to \cX$
is a linear operator which is an infinitesimal generator of a strongly continuous semigroup denoted 
$(e^{t\cA})_{t\geq 0}$, $\cB:\RR \to \cX$
is a linear bounded operator, $\cC: D(\cA) \to \RR$ is a linear $\cA$-bounded and admissible operator for $\cA$, i.e., the operator $\cC$ is $\cA$-admissible, $x_0 \in \cX$, $z_0 \in \RR$, 
and $\psi:\RR\to\RR$ is a function satisfying the following definition.

\begin{definition}\label{def:generalized_sat}
The function $\psi:\RR\to\RR$ is a Lipschitz monotonic nonlinearity if it satisfies the following properties
\begin{enumerate}
\item\label{delip1} $\psi(s)= 0$ $\Leftrightarrow$ $s=0$;
\item\label{delip2} $\exists L>0$, $|\psi(s_1)- \psi(s_2)|\leq L |s_1- s_2|$, $\forall(s_1,s_2)\in \RR^2$;
\item\label{delip3} $(\psi(s_1)-\psi(s_2))(s_1-s_2)\geq 0$, $\forall (s_1,s_2)\in \RR^2$.
\end{enumerate}
\end{definition}

The main objective of this paper is to construct a feedback law with the aim of ensuring the well-posedness of the system \eqref{abstractform} and attaining global asymptotic stability for the zero solution. It is noteworthy that the system can be conceptualized as a cascade system, encompassing both an infinite-dimensional component, possibly represented by a partial differential equation (PDE), and an ordinary differential equation (ODE).

A typical scenario in which such a system structure appears is often the case of a closed-loop system with a control law that includes integral action and saturation. The saturation case, for instance, has been investigated in \cite{astolfi2022global}. However, here we do not assume that the operator $\cA$ generates an exponentially stable semi-group. Thus, when the control input is zero, the $x$ component of the state doesn't necessarily go to zero. Additionally, since we do not necessarily assume that the dynamics in $z$ are controlled, we do not consider the anti-windup method developed in the same article. Exploring the incorporation of anti-windup action could be a subject for future studies.

To begin, we make the following set of assumptions.

\begin{assumption}\label{ass_dissipativity}
The linear operator $\cA: D(\cA) \subset \cX \rightarrow \cX$, with domain  $D(\cA)$ dense in $\cX$, generates a $C_0$-semigroup denoted by $(e^{\cA t})_{t \geq 0}$ and satisfies the following:
\begin{enumerate}
\item \label{weakness} there exists a coercive self-adjoint positive operator $\cP \in \LL(\cX)$ such that
\begin{equation}\label{disseq}
\langle \cA x, \cP x
\rangle_\cX 
+
\langle x, \cP \cA x
\rangle_\cX \leq 0;
\end{equation}
\item \label{inverofa} it is an invertible operator with $\cA^{-1}$ belonging to $\LL (\cX, D(\cA))$;
\item \label{compactofa} $D(\cA)$ is compactly injected in $\cX$.
\end{enumerate}
\end{assumption}

We remark even in the case in which the function 
$\psi$ is linear, invertibility 
of the operator $\cA$ is a necessary condition 
for the stabilizability of the system 
\eqref{abstractform}, as well established
in the theory of output regulation, see, e.g. 
\cite{pohjolainen1982robust,paunonen2015controller}.

In the following, we use the state space $\cX_e:=\cX \times \RR$ equipped with the norm $\|(x,z) \|_{\cX_e}=\|x\|_\cX+|z|$.
Denoting the state by $\xi:=\begin{pmatrix} x \\ z \end{pmatrix}$, the system described earlier in equation  \eqref{abstractform} can be expressed in a more compact form as follows:  
\begin{equation} \label{compactform}
\dot \xi = \cF \xi +\cG  \psi(u(t)), \qquad
\xi(0) = \begin{pmatrix}x_0\\z_0\end{pmatrix},
\end{equation}
where the operator $\cF:D(\cF) \to \cX_e$ and $\cG$ in $\cL(\cX_e)$ are defined as 
$$\cF:=\begin{pmatrix} \cA & 0 \\ \cC & 0 \end{pmatrix}, 
\quad  \cG:=\begin{pmatrix} \cB \\ 0 \end{pmatrix}, 
$$
and
$ D(\cF)= D(\cA)\times\RR$.
According to \cite[Lemma 1]{xu1995robust}, we can conclude that the operator $\cF$ generates a $C_0$-semigroup denoted by $(e^{t \cF})_{t\geq0}$.

\section{Controller Design} 
\label{sec:controller}

\subsection{State feedback design} 

First of all, let $\cM: \cX \rightarrow \RR$ in $\cL(\cX,\RR)$ be defined by
\begin{equation}\label{definitionm}
\cM  x = \cC \cA^{-1} x,\quad  \forall x \in D(\cA).
\end{equation}
Note that by Assumption \ref{ass_dissipativity}, the operator $\cA^{-1}$ is in $\cL(\cX, D(\cA))$. 
Hence $\cM$ is well-defined and since $\cC$ is $\cA$-admissible, it yields that  $\cM$ belongs to  $\cL(\cX,\RR)$.
Note also that
\begin{equation}\label{eq_forwarding}
\cM \cA x = \cC x,\quad \forall x \in D(\cA).
\end{equation}
With the operator $\cM$ and a positive real number $\mu>0$, the state feedback controller is selected as follows:
\begin{equation}\label{controller}
   u =  \cK \xi,\qquad \cK=\begin{pmatrix}-\cB^\ast \cP- \mu \cB^\ast \cM^\ast \cM & & \mu \cB^\ast \cM^\ast\end{pmatrix},
\end{equation}
where $\cP$ is defined in~\eqref{disseq}, and $\xi$ is given by the compact form in \eqref{compactform}.
As a consequence, the closed-loop system reads as follows: 
\begin{equation}\label{closedloop}
\ddt \xi = \cF_{\psi} \xi, \qquad
\xi(0)=\begin{pmatrix}x_0\\z_0\end{pmatrix},
\end{equation} 
where 
$$
\cF_{\psi} \xi=\begin{pmatrix} \cA x +\cB \psi(-\cB^\ast\left(\cP x- \mu \cM^\ast (z- \cM x)\right)) \\ \cC x \end{pmatrix},$$
with the domain of operator $\cF_\psi$ given as
\begin{equation}\label{domainofdasig}
D(\cF_\psi):=D(\cF)=D(\cA)\times \RR . 
\end{equation}

\subsection{Well-posedness and stability}

With the previously described control law and within the framework of Assumption \ref{ass_dissipativity}, it is possible to attain an initial result stating that the closed-loop system is well-posed in the sense that a concept of solution exists. Additionally, it is demonstrated that the zero solution is globally stable in the Lyapunov sense.

\begin{theorem}[Well-posedness and stability]\label{theoremofpo}
Let $\psi$ be a Lipschitz bounded monotonic nonlinearity (see Definition \ref{def:generalized_sat}).
For any $\xi_0 \in \cX_e$ (resp. $\xi_0 \in D(\cF_{\psi})$), there exists a unique mild (resp. classical) solution denoted $e^{\cF_\psi t}\xi_0 \in C^0(\RR_{+};\cX_e)$ to \eqref{closedloop} (resp. $C^1(\RR_{+};\cX_e) \bigcap C^0(\RR_{+};D(\cF_{\psi}))$). 
Moreover, the origin is stable. In other words, there exists $k>0$ such that for each 
$\xi_0 \in \cX_e$, 
\begin{equation}\label{eq_stab}
\|e^{\cF_\psi t}\xi_0\|_{\cX_e} \leq k \|\xi_0\|_{\cX_e}, \forall t\geq 0.
\end{equation}
\end{theorem}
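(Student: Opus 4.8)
The plan is to treat the closed-loop system \eqref{closedloop} as a semilinear evolution equation and to combine a standard globally-Lipschitz perturbation argument for well-posedness with a forwarding-type weak Lyapunov functional for stability. For \textbf{well-posedness}, I would write the closed loop as $\ddt \xi = \cF \xi + \cG\,\psi(\cK\xi)$, with $\cF$, $\cG$, $\cK$ as in \eqref{compactform}--\eqref{controller}. Since $\cG \in \cL(\cX_e)$, $\cK$ is a bounded functional, and $\psi$ is globally Lipschitz (property \ref{delip2}), the map $\xi \mapsto \cG\,\psi(\cK\xi)$ is globally Lipschitz from $\cX_e$ into $\cX_e$. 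As $\cF$ generates the $C_0$-semigroup $(e^{t\cF})_{t\ge0}$ (recalled just before the theorem), I would invoke the classical theory of semilinear equations with a globally Lipschitz nonlinearity to conclude that every $\xi_0 \in \cX_e$ produces a unique global mild solution $e^{\cF_\psi t}\xi_0 \in C^0(\RR_{+};\cX_e)$, and every $\xi_0 \in D(\cF_\psi) = D(\cF)$ a classical solution in $C^1(\RR_{+};\cX_e)\cap C^0(\RR_{+};D(\cF_\psi))$. The global Lipschitz bound is precisely what excludes finite-time blow-up and yields solutions on all of $\RR_{+}$.

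For \textbf{stability}, I would introduce the candidate weak Lyapunov functional
\begin{equation*}
V(\xi) = \langle x, \cP x \rangle_\cX + \mu\,(z - \cM x)^2 ,
\end{equation*}
which is the natural object for the forwarding design. Setting $w := z - \cM x$, the identity $\cM\cA x = \cC x$ from \eqref{eq_forwarding} gives $\dot w = -\cM\cB\,\psi(u)$ along classical solutions, while the controller \eqref{controller} rewrites as $u = -\cB^\ast\cP x + \mu\,\cB^\ast\cM^\ast w$. Differentiating $V$ along a classical solution and using the dissipativity inequality \eqref{disseq}, the self-adjointness of $\cP$, and the scalar nature of $w$, $u$ and $\cM\cB$, the indefinite cross terms collapse exactly into $-u$, yielding
\begin{equation*}
\ddt V(\xi(t)) \le -2\,u\,\psi(u) \le 0 ,
\end{equation*}
where the last inequality follows from monotonicity (property \ref{delip3}) together with $\psi(0)=0$. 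Hence $V$ is non-increasing along classical solutions.

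Finally, coercivity of $\cP$ from Assumption \ref{ass_dissipativity} together with boundedness of $\cM \in \cL(\cX,\RR)$ gives constants $0 < c_1 \le c_2$ such that $c_1\|\xi\|_{\cX_e}^2 \le V(\xi) \le c_2\|\xi\|_{\cX_e}^2$; combined with the monotonicity of $V$, this yields $\|e^{\cF_\psi t}\xi_0\|_{\cX_e} \le \sqrt{c_2/c_1}\,\|\xi_0\|_{\cX_e}$, i.e.\ \eqref{eq_stab} with $k = \sqrt{c_2/c_1}$, and the bound extends to all mild solutions by density of $D(\cF_\psi)$ in $\cX_e$ and continuity of both $V$ and the solution map. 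I expect the \emph{main obstacle} to be the rigorous Lyapunov-derivative computation: it is legitimate a priori only for classical solutions, where $x(t)\in D(\cA)$ so that \eqref{disseq} and \eqref{eq_forwarding} apply and $\cC x$ is well defined. Care is needed to handle the $\cA$-unbounded output $\cC$ — which is why the computation is routed through the bounded operator $\cM = \cC\cA^{-1}$ and the scalar variable $w$ — and then to transfer the resulting estimate from classical to mild solutions by approximation.
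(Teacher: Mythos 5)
Your argument is correct, and for the well-posedness half it takes a genuinely different route from the paper. The paper establishes that $\cF_\psi$ is $m$-dissipative for the inner product \eqref{innerproductinv} (Lemma~\ref{lemma_Coercivity} and Proposition~\ref{Prop_MDiss}, where the range condition is obtained via a Schauder fixed-point argument) and then invokes the theory of nonlinear contraction semigroups; stability follows from the contraction property together with the norm equivalence. You instead view \eqref{closedloop} as a globally Lipschitz perturbation of the $C_0$-semigroup generated by $\cF$, which is legitimate here precisely because $\cB$, $\cP$, $\cM$ are all bounded so that $\xi\mapsto\cG\psi(\cK\xi)$ is globally Lipschitz on $\cX_e$. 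This buys you something real: for Theorem~\ref{theoremofpo} alone you need neither the compact injection $D(\cA)\hookrightarrow\cX$ (item~\ref{compactofa}) of Assumption~\ref{ass_dissipativity}, which the paper uses only for the range condition) nor the coercivity of $\cP$ for existence, only for the stability estimate. Your Lyapunov computation is algebraically identical to the paper's dissipativity computation in Proposition~\ref{Prop_MDiss} ($V=\|\xi\|_V^2$ and $\langle\cF_\psi\xi,\xi\rangle_V\le-u\,\psi(u)$), merely evaluated along trajectories, and the density/continuity argument transferring \eqref{eq_stab} to mild solutions is standard. Two caveats. First, with $\psi$ only Lipschitz (not $C^1$), the perturbation theory gives, for $\xi_0\in D(\cF)$ in a Hilbert space, a \emph{strong} solution (Lipschitz in $t$, differentiable a.e.) rather than a genuinely classical $C^1$ one; this suffices for your Lyapunov argument since $t\mapsto V(\xi(t))$ is then locally Lipschitz with a.e.\ nonpositive derivative, and the nonlinear-semigroup route of the paper yields exactly the same regularity, so the gap is shared with the statement itself rather than specific to your proof. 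Second, the $m$-dissipativity machinery you bypass is not wasted effort in the paper: it is reused in the proof of Theorem~\ref{theoremofsta}, where the LaSalle argument of \cite{slemrod1989feedback} requires the contraction-semigroup structure and the compactness of $(\lambda\cF_\psi+\Id)^{-1}$, so your shortcut defers rather than eliminates that work.
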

The proof of this theorem is provided in Section \ref{sec_ProofTheo1}. Its crucial element is to demonstrate that the operator $\cF_\psi$ is an $m$-dissipative operator for a specific Hilbert space, whose norm is equivalent to the norm of the space $\cX_e$. Once this is established, the result on well-posedness and stability follows by invoking the theory of nonlinear semigroups on Hilbert space \cite[Theorem 4.20]{Isao1992} or \cite[Theorem 3.1, page 54]{brezis1973ope}.

Note that even in the particular case in which $\psi$ is linear, this theorem does not appear to be written in the literature. Comparing it with the article \cite{Terrand2020} and other works, there is no assumption that the operator $\cA$ is exponentially stable. Consequently, the control law appears more intricate. For instance, it does not seem generally feasible to employ a proportional control of the form $u=-k_i z$ with a suitably chosen coefficient $k_i$. 
Note that item~\ref{compactofa}) in Assumption \ref{ass_dissipativity} is an assumption that can be removed if $\psi$ is linear.
This one was not introduced in \cite{Terrand2020} or \cite{astolfi2022global,lorenzetti2023saturating}.
Indeed, such an assumption is needed to allow the use of the Schauder fixed-point theorem and ensures the so-called \textit{range condition} for $m$-dissipative operators. Note that such an assumption will be also invoked in the next theorem claiming asymptotic stability of the origin when applying LaSalle arguments.

We can observe that an assumption of coercivity is imposed on the operator $\cP$. 
Once again, this assumption stems from the fact that, in demonstrating stability, we establish the $m$-dissipative nature of the closed-loop operator. This characteristic is also pivotal in proving the subsequent theorem. It remains an open question whether such a result is attainable without relying on $m$-dissipativity (by employing Datko-type arguments or strong Lyapunov functionals).

Also, it is not established that this control law guarantees asymptotic convergence of trajectories toward the origin. As we will see in the following section, additional assumptions need to be introduced to ensure that the solutions converge to the system's origin.

\subsection{Asymptotic convergence result}

In the proof of the preceding result, it was demonstrated that the operator $\cF_\psi$ is $m$-dissipative. The natural question arises: is the dissipated quantity sufficient to guarantee convergence towards the origin of the system, thereby ensuring global asymptotic stability of this equilibrium point? Drawing inspiration from the findings in \cite{slemrod1989feedback}, we are aware that observability is a key property that can facilitate such a property. The observability assumptions we propose for the system are as follows.
\begin{assumption}\label{ass_Obs}
The following two conditions hold.
\begin{enumerate}
    \item \label{condiofin} The condition $\cC \cA^{-1} \cB \neq 0$ is satisfied. 
\item  \label{condiofobs} The pair $(\cA,\cB^*\cP)$ is
approximately observable. In other words, 
$$
\cB^*\cP e^{\cA t} x_0 = 0, \forall t\geq 0\  \Rightarrow x_0 = 0.
$$
\end{enumerate}
\end{assumption}

The first condition is already introduced in Pohjolainen's seminal paper \cite{pohjolainen1982robust} concerning the addition of integral action. It is worth noting that, in finite dimensions, this condition is a necessary requirement for stabilization when the function $\psi$ is linear.
The second condition is introduced to handle the control nonlinearity.

\begin{theorem}[Asymptotic stability]\label{theoremofsta}
Let  Assumptions \ref{ass_dissipativity}, \ref{ass_Obs} hold.
Then, the origin of the closed-loop system \eqref{closedloop} with the control law \eqref{controller} is globally 
asymptotically stable. In other words, \eqref{eq_stab} is satisfied and for each $\xi_0$ in $\cX_e$
and moreover
\begin{equation*}
\lim_{t\rightarrow +\infty}\|e^{\cF_\psi t}\xi_0\|_{\cX_e}= 0.
\end{equation*}
\end{theorem}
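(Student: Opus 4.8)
The plan is to prove convergence via a LaSalle-type invariance principle for the nonlinear contraction semigroup $(e^{\cF_\psi t})_{t\ge0}$ generated in Theorem~\ref{theoremofpo}. Since the stability bound \eqref{eq_stab} is already available, only the limit $\|e^{\cF_\psi t}\xi_0\|_{\cX_e}\to0$ remains to be shown. I would combine three ingredients: (i) a \emph{weak} Lyapunov functional whose dissipation rate vanishes only when $u=0$; (ii) precompactness of trajectories, furnished by the compact embedding in item~\ref{compactofa}) of Assumption~\ref{ass_dissipativity}; and (iii) the observability conditions of Assumption~\ref{ass_Obs}, used to rule out any nontrivial complete trajectory along which $u\equiv0$.

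First I would introduce the forwarding coordinate $\eta:=z-\cM x$. Using $\cM\cA x=\cC x$ from \eqref{eq_forwarding}, a direct computation gives $\dot\eta=-(\cM\cB)\,\psi(u)$, where $\cM\cB=\cC\cA^{-1}\cB$ is the nonzero scalar of item~\ref{condiofin}). As Lyapunov functional I would take the square of the equivalent norm used in the proof of Theorem~\ref{theoremofpo}, namely
\[
V(x,z)=\langle x,\cP x\rangle_\cX+\mu\,\eta^2 .
\]
Differentiating along classical solutions and using the dissipation inequality \eqref{disseq}, the self-adjointness of $\cP$, and the identity $-u=\cB^\ast\cP x-\mu(\cM\cB)\eta$, the cross terms collapse and one obtains $\dot V\le -2\,u\,\psi(u)$. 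By monotonicity (property~\ref{delip3}) together with property~\ref{delip1} of Definition~\ref{def:generalized_sat}, $u\,\psi(u)\ge0$ with equality iff $u=0$; hence $V$ is non-increasing and its dissipation rate vanishes exactly on the set $\{u=0\}$.

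Next I would establish precompactness of orbits. For $\xi_0\in D(\cF_\psi)$, the contraction property of the nonlinear semigroup implies that $t\mapsto\|\cF_\psi e^{\cF_\psi t}\xi_0\|_{\cX_e}$ is non-increasing, so the trajectory remains in a bounded subset of $D(\cF_\psi)=D(\cA)\times\RR$ for the graph norm. Because $D(\cA)$ is compactly injected in $\cX$ (item~\ref{compactofa})) and the $z$-component is finite-dimensional, this bounded set is relatively compact in $\cX_e$; thus the orbit is precompact and its $\omega$-limit set $\omega(\xi_0)$ is nonempty, compact and invariant. Since $V$ is non-increasing and bounded below, it converges to some $V_\infty$, and $V\equiv V_\infty$ on $\omega(\xi_0)$; invariance then forces $\dot V\equiv0$, i.e. $u\equiv0$, along every trajectory contained in $\omega(\xi_0)$.

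It remains to characterize such trajectories, which I expect to be the main obstacle. Along them $u\equiv0$ gives $\psi(u)\equiv0$, so the dynamics reduce to $\dot x=\cA x$ with $\eta\equiv\eta_0$ constant; hence $x(t)=e^{\cA t}x_0$ and, from $-u\equiv0$, one gets $\cB^\ast\cP e^{\cA t}x_0=\mu(\cM\cB)\eta_0=:c$ for all $t\ge0$. Applying this with $x_0$ replaced by $e^{\cA s}x_0-x_0$ yields $\cB^\ast\cP e^{\cA t}(e^{\cA s}x_0-x_0)\equiv0$, so by approximate observability (item~\ref{condiofobs})) we obtain $e^{\cA s}x_0=x_0$ for all $s\ge0$; invertibility of $\cA$ (item~\ref{inverofa})) then forces $x_0=0$, whence $c=0$, and since $\cM\cB=\cC\cA^{-1}\cB\neq0$ (item~\ref{condiofin})) and $\mu>0$ we conclude $\eta_0=0$, i.e. $\omega(\xi_0)=\{0\}$. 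Therefore $e^{\cF_\psi t}\xi_0\to0$ for every $\xi_0\in D(\cF_\psi)$, and the stability bound \eqref{eq_stab} together with the density of $D(\cF_\psi)$ in $\cX_e$ extends the convergence to arbitrary $\xi_0\in\cX_e$, which establishes global asymptotic stability.
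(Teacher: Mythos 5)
Your proposal is correct and follows essentially the same route as the paper: the weak Lyapunov functional $\langle x,\cP x\rangle_\cX+\mu(z-\cM x)^2$ with $\dot V\le -2u\,\psi(u)$, precompactness of orbits via the compact injection $D(\cA)\hookrightarrow\cX$, a LaSalle invariance argument, and then the shift-trick observability argument --- your inline analysis of the residual dynamics with $u\equiv 0$ (constancy of $\eta=z-\cM x$, constancy of $\cB^\ast\cP e^{\cA t}x_0$, approximate observability plus invertibility of $\cA$, and $\cC\cA^{-1}\cB\neq 0$) is exactly the content of the paper's Lemmas~\ref{lemmaobser} and~\ref{Lem_Obs}. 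The only cosmetic difference is that you derive precompactness from the monotonicity of $t\mapsto\|\cF_\psi e^{\cF_\psi t}\xi_0\|$ along the contraction semigroup, whereas the paper invokes compactness of $(\lambda \cF_\psi+\Id)^{-1}$ and Slemrod's results; both hinge on item~\ref{compactofa}) of Assumption~\ref{ass_dissipativity}.
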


The proof of this theorem is postponed in Section \ref{sec_ProofAssymptotic}. This one is decomposed as follows. In the first step, it is shown that the output $\cK \xi$ is an approximately observable output for the semi-groups $t\mapsto e^{\cF t}$. In the second step, it is shown that we can apply a LaSalle procedure.

\section{Proofs}
\label{sec:proofs}
\subsection{Proof of Theorem \ref{theoremofpo}}
\label{sec_ProofTheo1}
In order to prove this result, we first establish that the operator $\cF_\psi$ is a maximal dissipative operator with respect to a particular norm.

\begin{lemma}\label{lemma_Coercivity}
Let the inner product $\langle \cdot, \cdot \rangle_{V}$ be 
defined as
\begin{align}\label{innerproductinv}
\langle \begin{psmallmatrix}x_1\\z_1\end{psmallmatrix}, \begin{psmallmatrix}x_2\\z_2\end{psmallmatrix}\rangle_{V} &= \langle \cP x_1, x_2\rangle_\cX \notag\\
&~~~+ \mu (z_1-\cM x_1) (z_2-\cM x_2), 
\end{align}
where $\cP$ is defined in Point \ref{weakness}) of Assumption \ref{ass_dissipativity}, $\mu$ is a positive scalar and $\cM$ is defined in \eqref{definitionm}.
The norm $\| \cdot \|_{V}$ induced by this inner product is equivalent to the usual norm $\| \cdot \|_{\cX_e} $ in $\cX_e$, i.e., there exists $\underline{v}>0$ and $\overline{v}>0$ such that
\begin{equation}
\underline{v} \|\xi\|_{\cX_e}\leq \|\xi\|_V\leq   \bar v\|\xi\|_{\cX_e}  .
\end{equation}
\end{lemma}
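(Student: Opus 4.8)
The plan is to work throughout with the squared norm $\|\xi\|_V^2 = \langle \cP x, x\rangle_\cX + \mu\,(z - \cM x)^2$ associated with $\xi = (x,z)$, and to compare it with $\|\xi\|_{\cX_e}^2 = (\|x\|_\cX + |z|)^2$. A useful first reduction is to note that $(\|x\|_\cX + |z|)^2$ is squeezed between $\|x\|_\cX^2 + z^2$ and $2(\|x\|_\cX^2 + z^2)$, so it suffices to bound $\|\xi\|_V^2$ from above and below by constant multiples of $\|x\|_\cX^2 + z^2$. Two structural ingredients drive the estimates: the coercivity of $\cP$ together with its boundedness in $\LL(\cX)$, which furnish scalars $0 < \underline{p} \leq \overline{p}$ with $\underline{p}\,\|x\|_\cX^2 \leq \langle \cP x, x\rangle_\cX \leq \overline{p}\,\|x\|_\cX^2$, and the boundedness of $\cM \in \LL(\cX,\RR)$ established after \eqref{definitionm}, which gives $|\cM x| \leq \|\cM\|\,\|x\|_\cX$.

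For the upper bound I would expand $(z - \cM x)^2 \leq 2z^2 + 2\|\cM\|^2\|x\|_\cX^2$ and combine it with $\langle \cP x, x\rangle_\cX \leq \overline{p}\,\|x\|_\cX^2$, obtaining $\|\xi\|_V^2 \leq (\overline{p} + 2\mu\|\cM\|^2)\,\|x\|_\cX^2 + 2\mu\, z^2$, which is at most $\max(\overline{p} + 2\mu\|\cM\|^2,\, 2\mu)\,(\|x\|_\cX^2 + z^2) \leq \overline{v}^2\,\|\xi\|_{\cX_e}^2$. This direction is routine bookkeeping.

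The more delicate direction is the lower bound, where one must recover $z^2$ from the quantity $(z - \cM x)^2$, which a priori only controls the deviation of $z$ from $\cM x$ rather than $z$ itself. The key observation is that $|z| \leq |z - \cM x| + |\cM x| \leq |z - \cM x| + \|\cM\|\,\|x\|_\cX$, whence $z^2 \leq 2(z - \cM x)^2 + 2\|\cM\|^2\|x\|_\cX^2$ and therefore $\|x\|_\cX^2 + z^2 \leq (1 + 2\|\cM\|^2)\|x\|_\cX^2 + 2(z - \cM x)^2$. Feeding in the coercive lower bound $\|x\|_\cX^2 \leq \underline{p}^{-1}\langle \cP x, x\rangle_\cX$ and writing $(z - \cM x)^2 = \mu^{-1}\mu(z - \cM x)^2$, each term on the right is dominated by $\|\xi\|_V^2$, so that $\|\xi\|_{\cX_e}^2 \leq \max\!\big(\tfrac{2(1+2\|\cM\|^2)}{\underline{p}},\, \tfrac{4}{\mu}\big)\,\|\xi\|_V^2$, which gives $\underline{v}$ after taking square roots.

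The step I expect to be the crux is precisely this reconstruction of $z^2$ in the lower bound: the inner product $\langle \cdot,\cdot\rangle_V$ weights the \emph{shifted} coordinate $z - \cM x$ rather than $z$, so the coercivity of $\cP$ alone cannot close the estimate — one genuinely needs the boundedness of $\cM$ (guaranteed here by the $\cA$-admissibility of $\cC$ together with $\cA^{-1} \in \LL(\cX, D(\cA))$ from Assumption \ref{ass_dissipativity}) to control the cross term. Once this is in place, everything else reduces to elementary inequalities.
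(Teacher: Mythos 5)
Your proof is correct and takes essentially the same approach as the paper: both directions rest on the coercivity and boundedness of $\cP$, the boundedness of $\cM$, and elementary quadratic inequalities. The only (cosmetic) difference is in the lower bound, where the paper expands $\|\xi\|_V^2$ from below using $|w_1-w_2|^2\geq \epsilon(\tfrac12|w_1|^2-|w_2|^2)$ and then tunes $\epsilon$ small to absorb the negative $\|x\|_\cX^2$ term, whereas you dominate $\|x\|_\cX^2+z^2$ term-by-term by the two nonnegative pieces of $\|\xi\|_V^2$, which avoids the parameter tuning and is, if anything, slightly cleaner.
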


\begin{proof}
By using Cauchy-Schwarz inequality, we have 
\begin{align}\label{minv}
\!\! \! \| \begin{psmallmatrix}x\\z\end{psmallmatrix} \|_{V} \! \leq \! \left( \| \cP \|_{\cL(\cX)} \! + \! 2\mu \|\cM \|^2_{\cL(\cX, \RR)} \right)\|x\|_{\cX}^2 \!+\! 2\mu |z|^2
\end{align}
for any $(x, z) \in \cX_e$.
Due to the coercive property of $\cP$, we have the following inequality for a positive constant $c > 0$:
\begin{equation}\label{inequa1}
\langle \cP x, x \rangle_\cX \geq c \| x \|^2_{\cX}.
\end{equation}
Additionally, we have the following inequality for every $\epsilon \in (0,1)$ and $w_1, w_2 \in \RR$:
\begin{equation}\label{inequ2}
|w_1 - w_2|^2 \geq \epsilon \left( \frac{1}{2} |w_1|^2 -|w_2|^2 \right).
\end{equation}
Combining these two inequalities \eqref{inequa1} and \eqref{inequ2}, we obtain the following:
\begin{align} \label{maxv}
\| \begin{psmallmatrix}x\\z\end{psmallmatrix} \|_{V} &\geq c \|x\|^2_{\cX} + \epsilon \mu \left(\frac{1}{2}|z|^2 - \|\cM \|^2_{\cL(\cX, \RR)} \| x\|^2_{\cX}\right) \notag \\
&\geq \left( c - \epsilon \mu \|\cM \|^2_{\cL(\cX, \RR)} \right) \|x\|^2_{\cX} +  \frac{\epsilon \mu}{2}|z|^2.
\end{align}
With \eqref{minv} and \eqref{maxv}, we can choose $\epsilon$ sufficiently small to have a positive lower bound. Thus, we demonstrate the equivalence of the norms $\| \cdot \|_{\cX_e} $ and $\| \cdot \|_{V}$. 

\end{proof}

\begin{proposition}\label{Prop_MDiss}
The operator $\cF_\psi$ is $m$-dissipative with respect to the  scalar product $\langle\cdot,\cdot\rangle_V$, i.e.,
for each $(\xi_1,\xi_2)$  in $D(\cF_\psi)^2$, it yields
\begin{equation}
\langle \cF_\psi(\xi_1) - \cF_\psi(\xi_2),\xi_1-\xi_2 \rangle_V \leq 0,
\end{equation}
and moreover the range condition is satisfied, i.e., for each $\lambda>0$ and $\xi \in \cX_e$, there exists  $\tilde \xi\in D(\cF_\psi)$ such that 
\begin{equation}
    (\lambda \cI_{\cX_e} - \cF_\psi)\tilde\xi = \xi.
\end{equation}
\end{proposition}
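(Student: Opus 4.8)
The plan is to handle the two claims separately. For dissipativity I would write $\xi_i=(x_i,z_i)$, set $\bar x = x_1-x_2$, $\bar z = z_1 - z_2$, and abbreviate by $u_i = -\cB^\ast(\cP x_i - \mu\cM^\ast(z_i - \cM x_i))$ the argument of $\psi$ in $\cF_\psi(\xi_i)$, writing $\Delta\psi := \psi(u_1)-\psi(u_2)$. Expanding $\langle \cF_\psi(\xi_1)-\cF_\psi(\xi_2),\xi_1-\xi_2\rangle_V$ with \eqref{innerproductinv}, the first summand yields $\langle \cP\cA\bar x,\bar x\rangle_\cX + \Delta\psi\,\cB^\ast\cP\bar x$, and the second summand yields $\mu\bigl(\cC\bar x - \cM\cA\bar x - \cM\cB\,\Delta\psi\bigr)(\bar z - \cM\bar x)$. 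The decisive step is the forwarding identity \eqref{eq_forwarding}, $\cM\cA\bar x = \cC\bar x$, which cancels the two copies of $\cC\bar x$ and collapses the second factor to $-(\cM\cB)\,\Delta\psi$, where $\cM\cB = \cC\cA^{-1}\cB$ is a real scalar.

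Collecting terms and using $\cB^\ast\cM^\ast = (\cM\cB)^\ast = \cM\cB$, the surviving cross terms are $\Delta\psi\bigl[\cB^\ast\cP\bar x - \mu(\cM\cB)(\bar z - \cM\bar x)\bigr]$, which I recognize as $-\Delta\psi\,(u_1-u_2)$ directly from the definition of $u_i$. Hence the whole inner product reduces to $\langle\cP\cA\bar x,\bar x\rangle_\cX - (\psi(u_1)-\psi(u_2))(u_1-u_2)$. The first term is nonpositive by Point~\ref{weakness}) of Assumption~\ref{ass_dissipativity} together with the self-adjointness of $\cP$ (which gives $\langle\cP\cA\bar x,\bar x\rangle_\cX\le 0$), and the second is nonpositive by the monotonicity property~\ref{delip3}) in Definition~\ref{def:generalized_sat}; dissipativity follows.

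For the range condition, given $\lambda>0$ and $\xi=(x^\sharp,z^\sharp)\in\cX_e$, I would solve $(\lambda\cI_{\cX_e}-\cF_\psi)\tilde\xi=\xi$ by first eliminating the finite-dimensional component. The scalar equation $\lambda\tilde z - \cC\tilde x = z^\sharp$ gives $\tilde z = \lambda^{-1}(z^\sharp + \cC\tilde x)$, which is well defined as soon as $\tilde x\in D(\cA)$, since $\cC$ is $\cA$-bounded. Substituting this into the argument of $\psi$ and again using \eqref{eq_forwarding} to replace $\cC\tilde x = \cM\cA\tilde x$, the first equation becomes the single fixed-point problem $\tilde x = (\lambda\cI_\cX - \cA)^{-1}\bigl(x^\sharp + \cB\,\psi(\tilde u(\tilde x))\bigr) =: T(\tilde x)$, where $\tilde u(\tilde x)$ is an affine, continuous, scalar-valued function of $\tilde x$. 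The resolvent $(\lambda\cI_\cX-\cA)^{-1}$ exists and is bounded because $\cA$ is dissipative with respect to the $\cP$-inner product (an equivalent inner product on $\cX$) and generates a $C_0$-semigroup, hence a contraction semigroup by Lumer--Phillips, so that $(0,\infty)\subset\rho(\cA)$.

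It then remains to apply a fixed-point theorem to $T$. The map $T$ is continuous and has bounded range because $\psi$ is bounded, and it is compact since $(\lambda\cI_\cX-\cA)^{-1}\colon\cX\to\cX$ is compact by the compact injection of $D(\cA)$ into $\cX$ (Point~\ref{compactofa}) of Assumption~\ref{ass_dissipativity}). Choosing a closed ball that $T$ maps into itself, Schauder's fixed-point theorem yields a fixed point $\tilde x\in D(\cA)$; setting $\tilde z=\lambda^{-1}(z^\sharp+\cC\tilde x)$ produces $\tilde\xi\in D(\cA)\times\RR=D(\cF_\psi)$ solving the range equation. I expect this range condition to be the main obstacle: the delicate points are to reduce the coupled system to a fixed-point equation that is well posed despite the $\cA$-boundedness of $\cC$, and to verify the compactness and self-mapping hypotheses needed to invoke Schauder's theorem.
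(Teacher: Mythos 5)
Your proposal is correct and follows essentially the same route as the paper: the dissipativity computation (expand in the $\langle\cdot,\cdot\rangle_V$ inner product, cancel $\cC\tilde x$ against $\cM\cA\tilde x$ via \eqref{eq_forwarding}, and recognize the remainder as $-(\psi(u_1)-\psi(u_2))(u_1-u_2)$) is identical, and the range condition is likewise reduced, after eliminating $\tilde z$, to a scalar-input fixed-point equation for $\tilde x$ involving $(\lambda\cI_\cX-\cA)^{-1}$ and solved by Schauder's theorem using the compact injection of $D(\cA)$ into $\cX$. The only minor differences are that you justify $(0,\infty)\subset\rho(\cA)$ via Lumer--Phillips (cleaner than the paper's eigenvalue remark) and bound the range of the fixed-point map using the boundedness of $\psi$ (consistent with the hypotheses of Theorem~\ref{theoremofpo}), whereas the paper uses the Lipschitz estimate $|\psi(\phi(\bar x))|\le L|\phi(\bar x)|$ on a ball in the graph norm of $\cA$; both variants land on the same Schauder argument.
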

\begin{proof}
For the first part of the statement, 
let {\color{blue}$\xi_1=(x_1, z_1)$ and $\xi_2=(x_2, z_2)$} be in $D(\cF_\psi)$. We introduce $\tilde x = x_1-x_2$, $\tilde z = z_1 -z_2$, and $u_i=\cK \xi_i$ with $i\in\{1,2\}$ and $\cK$ in~\eqref{controller}. Using the inner product defined by \eqref{innerproductinv}, we can compute $\langle \cF_\psi (\xi_1)- \cF_\psi(\xi_2), \xi_1 - \xi_2 \rangle_V$ following these steps:
\begin{align*}
& \langle  \cF_\psi (\xi_1)- \cF_\psi(\xi_2), \xi_1 - \xi_2 \rangle_V  \\
& \qquad \quad 
\begin{aligned}
&=\langle \cP \cA \tilde x + \cP \cB\left(\psi(u_1)-\psi(u_2)\right), \tilde x \rangle_\cX  \\
&~~~+ \mu \left( \cC \tilde x - \cM(\cA \tilde x + \cB(\psi(u_1) - \psi(u_2))\right) (\tilde z - \cM \tilde x)  \\
&=\langle \cP \cA \tilde x, \tilde x \rangle_\cX + \langle \left(\psi(u_1)-\psi(u_2)\right), \cB^\ast \cP \tilde x \rangle_\RR  \\
&~~~-\mu \left( \cM \cB(\psi(u_1) - \psi(u_2)) \right) (\tilde z - \cM \tilde x) . 
\end{aligned}
\end{align*}
With the first point of Assumption \ref{ass_dissipativity}, it implies
\begin{align*}
 &\langle  \cF_\psi (\xi_1) - \cF_\psi(\xi_2), \xi_1 - \xi_2 \rangle_V   \\
& \qquad \quad
\begin{aligned}
&\leq (\psi(u_1) -\psi(u_2)) \cB^\ast \cP \tilde x  \\
&~~~-\mu (\psi(u_1) -\psi(u_2)) \cB^\ast \cM^\ast (\tilde z - \cM \tilde x)  \\
& = -(\psi(u_1) -\psi(u_2)) (-\cB^\ast \cP \tilde x +\mu \cB^\ast \cM^\ast (\tilde z - \cM \tilde x) ) \notag \\
&= - (\psi(u_1) -\psi(u_2)) (u_1-u_2) \leq 0. 
\end{aligned}
\end{align*}
For the range condition, 
consider $(x, z) \in \cX_e$, and for any positive constant $\lambda$, we need to find $(\tilde{x}, \tilde{z}) \in D(\cF_\psi)=D(\cA)\times\RR$, such that the following equations hold:
\begin{equation}\label{exisanduni}
\!\!\!\left\{
\begin{aligned}
&\lambda \tilde x - \cA \tilde x -\cB \psi \left( -\cB^\ast(\cP \tilde x -\mu \cM^\ast (\tilde z - \cM \tilde x) ) \right) =x, \\
&\lambda \tilde z -\cC \tilde x =z.
\end{aligned}
\right.
\end{equation} 
Since $\cA$ is assumed to be dissipative and invertible, as stated in Assumption \ref{ass_dissipativity}, it follows that the real parts of the eigenvalues of $\cA$ are non-negative, so any positive constant $\lambda$ belongs to the resolvent set $\rho(\cA)$. Then the operator $(\lambda \cI_{\cX} -\cA)^{-1}$ exists. From the first line, one has 
\begin{multline}\label{tildex}
\tilde x = (\lambda \cI_{\cX} -\cA)^{-1} \\\times\Big[[ \cB \psi\left( \cB^\ast (-\cP \tilde x + \mu \cM^\ast (\tilde z -\cM \tilde x))\right) +x \Big].
\end{multline}
From the second line of \eqref{exisanduni}, one has 
\begin{equation}\label{tildez}
\tilde z = \lambda^{-1}(\cC \tilde x + z). 
\end{equation}
Substituting \eqref{tildez} into \eqref{tildex}, we obtain the following expression for $\tilde x$: 
\begin{equation}\label{solutiontildex}
\tilde x = (\lambda \cI_{\cX} -\cA)^{-1} \Big[ \cB \psi(\phi(\tilde x))+x\Big],
\end{equation}
where 
\begin{equation}
\phi(\tilde x) = \cB^\ast \Big(-\cP \tilde x + \mu \cM^\ast (\lambda^{-1} (z -\cM (\lambda \cI_{\cX} - \cA)\tilde x))\Big),
\end{equation}
which implies that the first line of \eqref{exisanduni} can be expressed as an equation that depends only on $\tilde x$ given the provided data $x$ and $z$. To establish the existence of a solution to the above equation \eqref{solutiontildex}, we intend to utilize a fixed-point strategy. 
For this purpose, we define 
\begin{equation}
\cN: 
\left\{
\begin{aligned}&D(\cA)  \rightarrow  D(\cA)\subset \cX, \notag \\ 
&\bar x \mapsto \cN(\bar x),
\end{aligned}
\right.
\end{equation}
where $\cN$ is defined by
\begin{equation}\label{defN}
\cN(\bar x)\!=\! 
(\lambda \cI_{\cX} -\cA)^{-1} \Big[{\cB} \psi(\phi(\bar x))+x\Big].
\end{equation}
Once we prove the operator $\cN$ admits at least one fixed point, we have proved there exists a solution to \eqref{solutiontildex}. 
We consider the closed ball $\mathcal B(x_0;r) = \{x_0\in D(\cA): \|\cA x_0\|_\cX\leq r\}$. In the following, we need to prove that $\cN(\mathcal B(x_0;r))$ is included in a compact subset of $\cX$.
Then the Schauder fixed-point theorem \cite[Theorem B.17]{Coron2007} can be applied. 
Note that for each $\bar x\in\mathcal{B}(x_0;r)$, we have
\begin{multline}
\|\cN(\bar x)\|_{D(\cA)} \leq 
    \|(\lambda \cI_{\cX} -\cA)^{-1} \cB \|_{D(\cA)}|\psi(\phi(\bar x))| \\ + \|(\lambda \cI_{\cX} -\cA)^{-1} x \|_{D(\cA)}.
\end{multline}
But, with Definition \eqref{def:generalized_sat}, we have for each $\bar x\in\mathcal{B}(x_0;r)$
$$
|\psi(\phi(\bar x))|\leq L|\phi(\bar x)|,
$$
and we have
\begin{multline}
|\phi(\bar x)| \leq     \|\cB^\ast\|_{\mathcal L(\RR, \cX)}\|\cP\|_{\mathcal L(\cX)}\| \bar x \|_\cX \\ +
 \frac{\mu}{\lambda} \|\cB^\ast\|_{\mathcal L(\RR, \cX)} \|\cM^\ast\|_{\mathcal L(\RR, \cX)}
 \\
 \times\Big(
 |z| + \|\cM\|_{\mathcal L(\cX, \RR)}(\lambda \|\bar x\|_\cX + \|\cA \bar x\|_\cX) \Big).
\end{multline}
However, since we have for all $\bar x$ in $D(\cA)$
$$
 \|\bar x\|_\cX = \|\cA^{-1}\cA \bar x \|_\cX \leq \|\cA^{-1}\|_{\LL (\cX, D(\cA))}\|\cA \bar x \|_\cX .
$$
Consequently for each $\bar x\in\mathcal{B}(x_0;r)$
\begin{multline}
|\phi(\bar x)| \leq     \|\cB^\ast\|_{\mathcal L(\RR, \cX)}\|\cP\|_{\mathcal L(\cX)}\|\cA^{-1}\|_{\LL (\cX, D(\cA))} r \\ +
 \frac{\mu}{\lambda} \|\cB^\ast\|_{\mathcal L(\RR, \cX)} \|\cM^\ast\|_{\mathcal L(\RR, \cX)}\\
 \times \Big(
 |z| + \|\cM\|_{\mathcal L(\cX, \RR)}(\lambda r \|\cA^{-1}\|_{\LL (\cX, D(\cA))} + r) \Big). 
\end{multline}
Hence, for each $\bar x\in\mathcal{B}(x_0;r)$, 
$\|\cN(\bar x)\|_{D(\cA)}\leq \bar \cN$, where $\bar \cN$ is a positive real number.
Considering that $D(\cA)$ is compactly injected in $\cX$ {\color{blue}(as indicated in Item \ref{compactofa}) of Assumption \ref{ass_dissipativity})}, we can use the Schauder fixed-point theorem \cite[Theorem B.17]{Coron2007} to conclude that the operator $\cN$ admits a fixed point $\cN(\bar x) = \bar x$. 
In conclusion,  operator $\cF_\psi$ is maximal dissipative.
\end{proof}

\medskip
\noindent\textbf{Proof of Theorem \ref{theoremofpo}}.
With Proposition \ref{Prop_MDiss} we obtain that $\cF_\psi$ is $m$-dissipative.
From \cite[Theorem 4.20]{Isao1992}, the operator $\cF_\psi$ generates a strongly continuous semigroup of contractions on $\cX_e$ denoted by $(e^{\cF_\psi t})_{t \geq 0}$. 
With Lemma \ref{lemma_Coercivity}, it implies for each $\xi_0$ in $\cX_e$ for all $t\geq0$
\begin{equation}
\|e^{\cF_\psi t}\xi_0\|_{\cX_e} \leq \frac{1}{\underline v}\|e^{\cF_\psi t}\xi_0\|_{V}\leq \frac{1}{\underline v}\|\xi_0\|_{V}\leq \frac{\bar v}{\underline v}\|\xi_0\|_{\cX_e},
\end{equation}
and the result follows.\hfill$\blacksquare$

\subsection{Proof of Theorem \ref{theoremofsta}}
\label{sec_ProofAssymptotic}

In order to prove Theorem \ref{theoremofsta}, let us first introduce the following lemma which states that for an invertible operator with approximately observable output, the origin is the only solution which makes the output constant in time.
\begin{lemma} \label{lemmaobser}
Assume Assumptions \ref{ass_dissipativity} and Point \ref{condiofobs}) in \ref{ass_Obs} hold. Let $x_0$ be in $D(\cA)$, if there exists $y_0$ in $\RR$ such that
\begin{equation}\label{Cst_Output}
\cB^* \cP e^{\mathcal{A}t}x_0=y_0, \qquad \forall t\geq 0.
\end{equation}
Then $x_0=0$ and $y_0=0$. 
\end{lemma}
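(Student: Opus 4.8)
The plan is to reduce the hypothesis of a \emph{constant} output to the hypothesis of an \emph{identically zero} output for a related initial condition, so that Point \ref{condiofobs}) of Assumption \ref{ass_Obs} can be invoked directly. The key observation is that, since $x_0 \in D(\cA)$, the map $t \mapsto x(t) := e^{\cA t}x_0$ is a classical solution, hence continuously differentiable in $\cX$ with $\dot x(t) = \cA e^{\cA t} x_0 = e^{\cA t}\cA x_0$, the last equality being the standard commutation of the generator with its semigroup on $D(\cA)$.

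First I would set $y(t) := \cB^*\cP e^{\cA t} x_0$. Because $\cB^*\cP \in \cL(\cX,\RR)$ is bounded and $t \mapsto x(t)$ is $C^1$, the scalar function $y$ is itself $C^1$ with $\dot y(t) = \cB^*\cP \dot x(t) = \cB^*\cP e^{\cA t}\cA x_0$. The hypothesis $y(t) \equiv y_0$ forces $\dot y \equiv 0$, so that
\begin{equation*}
\cB^*\cP e^{\cA t}(\cA x_0) = 0, \qquad \forall t \geq 0 .
\end{equation*}
This is precisely the premise of the approximate observability condition in Point \ref{condiofobs}) of Assumption \ref{ass_Obs}, now applied to the vector $\cA x_0 \in \cX$; it yields $\cA x_0 = 0$.

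Next I would invoke the invertibility of $\cA$ from Point \ref{inverofa}) of Assumption \ref{ass_dissipativity}: from $\cA x_0 = 0$ we conclude $x_0 = 0$. Finally, evaluating the original identity at $t = 0$ gives $y_0 = \cB^*\cP x_0 = 0$, which closes the argument.

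I do not expect a serious obstacle here: the only points requiring care are justifying the differentiation of $y$ (which is exactly why the statement restricts to $x_0 \in D(\cA)$, ensuring a classical solution) and the commutation $\cA e^{\cA t} = e^{\cA t}\cA$ on $D(\cA)$. It is worth noting that, among the hypotheses of Assumption \ref{ass_dissipativity}, only the invertibility of $\cA$ is actually used in this particular lemma; neither the dissipativity inequality \eqref{disseq} nor the compact injection in Point \ref{compactofa}) enters the proof.
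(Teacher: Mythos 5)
Your proof is correct, and it takes a noticeably more direct route than the paper's. The paper argues by contradiction: it first applies the observability hypothesis to the differences $e^{\cA s}x_0 - x_0$ (using the time-shift $\cB^*\cP e^{\cA t}e^{\cA s}x_0 = y_0 = \cB^*\cP e^{\cA t}x_0$) to conclude that $e^{\cA s}x_0 = x_0$ for every $s \geq 0$, and only then differentiates the identity $(e^{\cA s}-\Id)x_0 = 0$ at $s=0$ to obtain $\cA x_0 = 0$; invertibility then finishes the argument exactly as in your version. You swap the order of these two operations: differentiate the (scalar, $C^1$) output first, which hands you $\cB^*\cP e^{\cA t}(\cA x_0) = 0$ in one line, and then apply observability a single time to the vector $\cA x_0 \in \cX$. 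This avoids the contradiction scaffolding and the family of applications of the observability hypothesis indexed by $s$, at the price of applying that hypothesis to $\cA x_0$, which need not lie in $D(\cA)$ --- harmless here, since Point \ref{condiofobs}) of Assumption \ref{ass_Obs} is stated for arbitrary elements of $\cX$, but worth being aware of. Your closing remarks are also accurate: both proofs use only the invertibility item of Assumption \ref{ass_dissipativity}, and the restriction $x_0 \in D(\cA)$ is exactly what licenses the differentiation step in either version.
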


\begin{proof}
Assume that there exists  $s\in \RR_{+}$, such that 
{\color{blue}\begin{equation}\label{assotherx}
x_{1}=e^{\cA s} x_0 \neq x_0. 
\end{equation}
}
Note that, for all $t\geq0$,
\begin{equation*}\label{y1t}
\begin{aligned}
\cB^* \cP e^{\cA t} x_1
&=\cB^* \cP e^{\cA (t+s)} x_0
=y_0 
=\cB^* \cP e^{\cA t} x_0,
\end{aligned}
\end{equation*}
where the last two identities follow from \eqref{Cst_Output}.
Hence, for all $t\geq 0$, it yields
$
\cB^* \cP e^{\cA t}(x_1-x_0)=0,
$
with item~\ref{condiofobs}) in Assumption \ref{ass_Obs}, it yields $x_1=x_0$, which is a contradiction to {\color{blue}\eqref{assotherx}}.
Hence, for all $s \in \RR_{+}$, $e^{\cA s} x_0=x_0$ holds. 
This gives
$
(e^{\cA s}-I)x_0 = 0.
$
Therefore, if $x_0$ is in the domain of $\cA$,
$$
\lim_{s\to 0}\frac{(e^{\cA s}-I)x_0}{s} = \cA x_0= 0,
$$
which by the invertibility of $\cA$ as stated in Assumption \ref{ass_dissipativity}, this implies that $x_0=0$ and consequently $y_0=0$.
\end{proof}

Employing this lemma, we can now show another lemma which establishes that the operator $\cK$ involved in the definition of the control law \eqref{controller} gives an approximately observable output for the linear semi-group obtained when setting the input at zero.
\begin{lemma}\label{Lem_Obs}
Let Assumptions \ref{ass_dissipativity}, \ref{ass_Obs} hold. 
Then, for any $\mu>0$, 
the pair $(\cF, \cK)$ is approximately observable, where $\cF$ and $\cK$ are given by~\eqref{compactform} and~\eqref{controller}, respectively.
\end{lemma}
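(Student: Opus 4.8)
The plan is to exploit the block–triangular structure of $\cF$ to put the output $\cK e^{\cF t}\xi_0$ into closed form and thereby reduce the statement to Lemma \ref{lemmaobser}. First I would compute the semigroup on a smooth initial datum $\xi_0=(x_0,z_0)\in D(\cF)=D(\cA)\times\RR$. Since the dynamics read $\dot x=\cA x$ and $\dot z=\cC x$, one has $x(t)=e^{\cA t}x_0$ and, using identity \eqref{eq_forwarding} together with the boundedness of $\cM$,
$$z(t)=z_0+\int_0^t \cC e^{\cA s}x_0\,\dd s = z_0+\cM\bigl(e^{\cA t}x_0-x_0\bigr).$$
The key observation is the \emph{forwarding invariant}: $z(t)-\cM x(t)=z_0-\cM x_0$ is constant in time.

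Substituting this into $\cK\xi=-\cB^\ast\cP x+\mu\cB^\ast\cM^\ast(z-\cM x)$ yields
$$\cK e^{\cF t}\xi_0=-\cB^\ast\cP e^{\cA t}x_0+\mu\cB^\ast\cM^\ast\bigl(z_0-\cM x_0\bigr),$$
an identity that extends to every $\xi_0\in\cX_e$ by density of $D(\cF)$ in $\cX_e$ and boundedness of all the operators involved (each of $e^{\cF t}$, $\cK$, $\cB^\ast\cP e^{\cA t}$, $\cB^\ast\cM^\ast$ and $\cM$ being bounded). Assuming $\cK e^{\cF t}\xi_0=0$ for all $t\ge0$ therefore forces $\cB^\ast\cP e^{\cA t}x_0=y_0$ for all $t\ge0$, with the constant $y_0:=\mu\cB^\ast\cM^\ast(z_0-\cM x_0)$. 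This is precisely the hypothesis of Lemma \ref{lemmaobser}.

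I would then apply Lemma \ref{lemmaobser} to conclude $x_0=0$ and $y_0=0$. The one technical point is that the lemma is stated for $x_0\in D(\cA)$, whereas approximate observability requires arbitrary $x_0\in\cX$; I would resolve this with the standard smoothing trick, applying the lemma to $x_\tau:=\int_0^\tau e^{\cA s}x_0\,\dd s\in D(\cA)$. Indeed $\cB^\ast\cP e^{\cA t}x_\tau=\int_0^\tau \cB^\ast\cP e^{\cA(t+s)}x_0\,\dd s=\tau y_0$ is again constant in $t$, so the lemma gives $x_\tau=0$ and $\tau y_0=0$ for every $\tau>0$; differentiating $x_\tau=0$ at $\tau\to0^+$ yields $x_0=0$, and $y_0=0$.

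Finally, with $x_0=0$ the relation $y_0=0$ reduces to $\mu\,\cB^\ast\cM^\ast z_0=0$. Since $\cM=\cC\cA^{-1}$, the scalar $\cB^\ast\cM^\ast$ equals $(\cM\cB)^\ast=\cC\cA^{-1}\cB$, which is nonzero by Point \ref{condiofin}) of Assumption \ref{ass_Obs}; together with $\mu>0$ this forces $z_0=0$. Hence $\xi_0=0$, establishing approximate observability of $(\cF,\cK)$ for every $\mu>0$. The main obstacle I anticipate is exactly the domain mismatch when invoking Lemma \ref{lemmaobser}, cleanly handled by the smoothing argument above; the remaining manipulations are routine bookkeeping with bounded operators.
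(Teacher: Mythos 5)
Your proposal is correct and follows essentially the same route as the paper: both identify the forwarding invariant (the paper phrases it as the constancy of $\varphi(t,x_0)=\cM e^{\cA t}x_0-\int_0^t\cC e^{\cA s}x_0\,\dd s$, which is equivalent to your $z(t)-\cM x(t)=\mathrm{const}$), reduce the vanishing of $\cK e^{\cF t}\xi_0$ to the constancy of $\cB^\ast\cP e^{\cA t}x_0$, invoke Lemma~\ref{lemmaobser}, and finish with $\cC\cA^{-1}\cB\neq 0$ to kill $z_0$. Your density/smoothing refinement to cover $x_0\notin D(\cA)$ is a welcome extra care that the paper sidesteps by working only on the dense domain $D(\cF)$.
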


\begin{proof}
Let $\xi_0=\begin{pmatrix} x_0 \\ z_0 \end{pmatrix}$ be in $D(\cF)$ 
and assume that
\begin{align}\label{outputform}
\cK e^{ \cF  t} \xi_0 = 0 , \ \forall t\geq 0.
\end{align}
We need to show that $\xi_0 = 0$. 
Note that since $\cC$ is admissible for all $t\geq 0$, we can write
\begin{align}\label{speform}
e^{\cF t} \xi_0 = \begin{bmatrix}
    e^{\cA t} x_0\\
    z_0 + \int_0^t \cC e^{\cA s} x_0 \dd s
\end{bmatrix}.
\end{align}

According to equation \eqref{speform}, it gives that
\begin{multline*}
    \cK e^{\cF t} \xi_0 = \left( -\cB^\ast\cP-\mu\cB^\ast \cM^\ast \cM \right) e^{\cA t} x_0  \\
+\mu \cB^\ast \cM^\ast \left( z_0 + \int_0^t \cC e^{\cA s} x_0 \dd s \right).
\end{multline*}
By reorganizing the term, it yields the expression
\begin{multline*}
\cK e^{\cF t} \xi_0 = \mu \cB^\ast \cM^\ast z_0 
-\cB^\ast \cP e^{\cA t} x_0- \mu \cB^\ast \cM^\ast \varphi(t,x_0),
\end{multline*}
where
$$
\varphi(t,x_0) = \left( \cM e^{\cA t} - \int_0^t \cC e^{\cA s}\dd s \right) x_0.
$$
Note that $x_0$ is in $D(\cA)$ hence $t\mapsto \varphi(t,x_0)$ is a $C^1(\mathbb{R};\mathbb{R})$. Also, for all $t$ in $\RR_+$, we have
\begin{align*}
\ddt \varphi(t,x_0)&= \cC \cA^{-1} \cA e^{\cA t}x_0 - \cC e^{\cA t}x_0  =0.
\end{align*}
Consequently, $\varphi$ is a constant function of time and
$$
\varphi(t,x_0) = \varphi(0,x_0) = \cM x_0.
$$
It yields that 
\begin{multline*}
\cK e^{\cF t} \xi_0 = -\cB^\ast\cP  e^{{\cA}t}x_0 -\mu \cB^\ast \cM^\ast \cM  x_0+ \mu \cB^\ast \cM^\ast z_0 =0,
\end{multline*}
or,
\begin{align}\label{eqeft}
    \cB^\ast\cP  e^{{\cA}t}x_0 &= -\mu \cB^\ast \cM^\ast \cM  x_0+\mu \cB^\ast \cM^\ast z_0
    = \rm(constant).
\end{align}
Based on Lemma \ref{lemmaobser}, we get $x_0=0$ and  $-\mu \cB^\ast \cM^\ast \cM  x_0+\mu \cB^\ast \cM^\ast z_0=0$. 
Then, with Point \ref{condiofin}) in Assumption \ref{ass_Obs} $\cB^\ast \cM^\ast = (\cC \cA^{-1} \cB)^\ast\neq 0$, the remaining term $\mu \cB^\ast \cM^\ast z_0=0$ in~\eqref{eqeft} ensures $z_0=0$.
In summary, \eqref{outputform} implies $\xi_0=0$ and we can conclude that the pair $(\cF,\cK)$ is approximately observable.
\end{proof}

\medskip
\noindent\textbf{Proof of Theorem \ref{theoremofsta}}.
Even if the proof is not written exactly in \cite{slemrod1989feedback}, we can follow similar steps used in the proof of \cite[Theorem 7.1]{slemrod1989feedback} to obtain the 
desired result.
With Proposition \ref{Prop_MDiss}, $\cF_\psi$ is $m$-dissipative with respect to the norm $\langle\cdot, \cdot\rangle_V$.
As in the proof \cite[Theorem 7.1]{slemrod1989feedback}, since $\psi$ is Lipschitz (see Definition \ref{def:generalized_sat} Point \ref{delip2}))and the operator $\cA$ has a compact resolvent (Assumption \ref{ass_dissipativity} ), it is possible to show that the operator $(\lambda \cF_\psi + \Id)^{-1}$ is compact for all $\lambda>0$.
Moreover, since we have
\begin{equation}\label{eq_dotV}
\langle \xi,\cF_\psi \xi\rangle_V \leq - \psi(\cK \xi)\cK \xi\ \leq 0,  
\end{equation}
 for each $\xi_0$ in $D(\cF_\psi)$, \cite[Theorem 6.1]{slemrod1989feedback} implies that there exist $R\geq 0$ and a compact subset $\Omega$ of $D(\cF_\psi)\cap\{\xi,\langle \xi,\cF_\psi \xi\rangle_V =R\}$ such that $t\mapsto e^{\cF_{\psi}t}\xi_0$ approaches $\Omega$ as $t\rightarrow +\infty$ and
$\Omega$ is positively invariant for the flow $e^{\cF_{\psi}t}$. 
Hence, \eqref{eq_dotV} implies that for all $\tilde \xi_0$
 in $\Omega$, $\psi(\cK e^{\cF_{\psi}t}\tilde \xi_0)\cK e^{\cF_{\psi}t}\tilde \xi_0=0$ for all $t\geq 0$.
 However, with item~\ref{delip1}) in Definition \ref{def:generalized_sat}, it implies that $\cK e^{\cF_{\psi}t}\tilde \xi_0=0$, for all $t\geq 0$.
 In this case, it implies that
 \begin{align}
 \ddt e^{\cF_{\psi}t}\tilde \xi_0 &= \cF e^{\cF_{\psi}t}\tilde \xi_0  + \cG\psi(\cK e^{\cF_{\psi}t}\tilde \xi_0), \notag\\
 &=\cF e^{\cF_{\psi}t}\tilde \xi_0.
 \end{align}
 Solution generated by $\cF$ being unique, it implies
 $$
 e^{\cF_{\psi}t}\tilde \xi_0 = e^{\cF t}\tilde \xi_0, \forall t\geq 0,
 $$
 and so,
 $$
 \cK e^{\cF t}\tilde \xi_0=0, \forall t\geq 0.
 $$
 Since by Lemma \ref{Lem_Obs}, the pair $(\cF,\cK)$ is approximately observable, it implies $\tilde \xi_0=0$.
 Hence, $\Omega=0$. Consequently solution initiated from $D(\cF_\psi)$ converges to zero. Following the proof of \cite[Theorem 7.1]{slemrod1989feedback}, since $D(\cF_\psi)$ is dense in $\cX_e$, the same convergence results holds for weak solutions initialized outside $D(\cF_\psi)$.
\hfill$\blacksquare$

\section{General method and example of wave equation coupled with an ODE at the boundary}
\label{sec:general}
\subsection{The  general approach}
The preceding section demonstrated the feasibility of designing a control law for a coupled system in which the control operator (i.e., $\cB$ in \eqref{abstractform}) is bounded. In this section, we illustrate that the same control strategy can be applied in the case of boundary control, i.e. 
when the control operator is unbounded. Thus, we demonstrate, through an example of a wave equation, that the method is effective in a broader class of system then the one covered by Theorem~\ref{theoremofsta}.
A more precise theorem could be written following
\cite{tucsnak2014well} but to ease the reading 
we propose here only a general methodology highlighting 
the main steps to be followed.

The approach that has been developed to address the control objective can be summarized as follows:
\begin{description}
    \item [\textbf{Step 1:}]~~Show dissipativity of the distributed parameter part of the open loop system via a coercive Lyapunov functional, as shown in Assumption \ref{ass_dissipativity}.
    \item [\textbf{Step 2:}]~~Construction of the operator $\cM$ such that along (smooth) solution to
    \eqref{abstractform} the time function $M(t)= \cM x(t)$ satisfies
     $$
    \dot M(t) = \dot z(t)   +  m\psi(u(t)),
     $$
     for some real number $m\neq 0$. Note that in the abstract formulation $m$ is simply the real number $\cC \cA^{-1} \cB$ and we recognize equation \eqref{eq_forwarding} and the first item of Assumption \ref{ass_Obs}.
    \item [\textbf{Step 3:}]~~Construction of the control law depending on the Lyapunov functional of Step 1 and $M$ from Step 2.
    \item [\textbf{Step 4:}]~~Show $m$-dissipativity for the closed loop system.
    \item [\textbf{Step 5:}]~~Show the observability condition and convergence.
\end{description}

\subsection{The system description}

Similar to what is done in \cite{prieur2016wave}, consider the following wave equation with nonlinear control action at the boundary:
\begin{equation}\label{waveequation}
\left\{
\begin{aligned}
y_{tt}(x,t) &= y_{xx}(x,t),  &&(t,x) \in \RR_+ \times [0,1], \\
y(0,t)& =0, && t \in \RR_+, \\
y_{x}(1,t)&=\psi(u(t)), && t \in \RR_+,\\
y(x,0)&=y_0(x), y_t(x, 0) = y_{t0}(x), && x\in [0,1]
\end{aligned}
\right.
\end{equation}
where $(y_0,y_{t0})$ is the initial condition which belongs to the Hilbert space $\cX= H^1_0(0,1)\times L^2(0,1)$, and  
$\psi$ is a monotonic Lipschitz nonlinearity 
satisfying Definition \ref{def:generalized_sat}.
It has been shown in \cite{prieur2016wave} that it is possible to design a state feedback that ensures asymptotic stabilization of the zero solution. In our context, we add an integral action to the former equation:
\begin{equation}\label{eq_IntAction}
\left\{
\begin{aligned}&\dot z(t) =y(1,t), \qquad   t \in \RR_+,\\
&z(0)=z_0,
\end{aligned}\right.
\end{equation}
for some initial condition $z_0$ in $\RR$.
We aim to design a stabilizing control law for the coupled system \eqref{waveequation}-\eqref{eq_IntAction} in the state space $\cX_e = \cX\times\RR$.

It can be noticed that when $u(t)=0$ for all $t$, there exist solutions $t\mapsto y(\cdot,t)$  to the distributed parameter system \eqref{waveequation} which are not converging to zero. Note for instance that
\begin{align*}
y(x,t) &= \sin\left(\frac{\pi}{2}x\right)\cos\left(\frac{\pi}{2}t\right),\\
y_t(x,t) &= -\frac{\pi}{2}\sin\left(\frac{\pi}{2}x\right)\sin\left(\frac{\pi}{2}t\right),
\end{align*}
is a smooth solution to the $y$ part of the equation for some particular $(y_0, y_{t0})$
which doesn't converge to zero. 
This shows that the system cannot be decomposed as an exponentially stable part and an integrator.
For these reasons, the approach presented in \cite{astolfi2022global} cannot be employed.

\subsection{A dissipative system (Step 1)}
It is well known that the wave equation has a dissipativity property. This has been employed for instance in \cite{prieur2016wave} to design saturating control laws.
For instance, if one considers the following energy function,
\begin{equation}
E(t) = \frac{1}{2}\int_0^1 y_t^2(x,t)  + y_x^2(x,t)\dd x,
\end{equation}
 is well defined and $C^1$  as long as $t\mapsto \{x\mapsto (y(x,t),y_t(x,t))\}$ belongs   to $C^1(\RR_+;\cX)$.
 Along a $C^1$ solution to \eqref{waveequation}  its time derivative satisfies
\begin{equation}
\ddt E(t) = \int_0^1 y_t(x,t)y_{xx}(x,t)  + y_x(x,t)y_{xt}(x,t)\dd x.
\end{equation}
Hence with an integration by parts, 
\begin{align*}
\ddt E(t) &= [y_x(x,t)y_{t}(x,t)]_0^1  \\
&= \psi(u(t))y_t(1,t) -y_x(0,t)y_{t}(0,t).
\end{align*}
Since $y(0,t)=0$ for all $t$, it implies $y_{t}(0,t)=0$ and
\begin{equation}
\ddt E(t)=\psi(u(t))y_t(1,t).
\end{equation}
Consequently when $u(t)=0$ for all $t$, the time function $t\mapsto E(t)$ is constant along $C^1$ solution.
Since moreover, one can find the positive constants $\bar e > \underline e >0$ such that for all $t$, 
\begin{multline*}
\underline e\left(\|y(\cdot,t)\|_{H^1_0(0,1)} + \|y(\cdot,t)\|_{L^2(0,1)}\right)
\\\leq  E(t) \leq \\
\bar e\left(\|y(\cdot,t)\|_{H^1_0(0,1)} + \|y(\cdot,t)\|_{L^2(0,1)}\right) .
\end{multline*}
We get the well known dissipativity property for the wave equation.

\subsection{Construction of the control law (Steps 2 and 3)}
Let $t\mapsto \xi(t)=(y(\cdot,t),y_t(\cdot,t),z(t))$ be a solution to \eqref{waveequation}-\eqref{eq_IntAction} which belongs to $C^1(\RR_+;\cX_e)$ for a particular $t\mapsto u(t)$.
Following what was done in the former section, we consider the time function $M:\RR_+\mapsto\RR$ defined as
$$
M(t) = -\int_0^1  x y_t(x,t)\dd x, 
$$
which is well defined and $C^1$.
Its time derivative satisfies
\begin{align*}
\ddt M(t)  
&= -\int_0^1 x y_{xx}(x,t)\dd x \\
&= \int_0^1  y_{x}(x,t)\dd x - [xy_x(x,t)]_0^1 \\
&= y(1,t) - y(0,t)-\psi(u(t)) \\
&=  \dot z(t)-\psi(u(t)).
\end{align*}
Hence, a candidate Lyapunov functional is simply
\begin{equation}
V(t) = E(t) + \frac{\mu}{2}(z(t)-M(t))^2 ,
\end{equation}
with a positive scalar $\mu$.
Indeed, this one satisfies
\begin{equation}
\underline v\|\xi(t)\|_{\cX_e}\leq V(t)\leq \bar v\|\xi(t)\|_{\cX_e},
\end{equation}
for some positive real numbers $\underline v$ and $\bar v$ which don't depend on the considered solution $t\mapsto \xi(t)$. Moreover, 
\begin{align*}
\ddt V(t) 
&= \psi(u(t))\left[y_t(1,t)+\mu(z(t)-M(t))\right],
\end{align*}
and a candidate boundary control law is then
\begin{equation}\label{eq_ControlWave}
u(t) = -y_t(1,t) - \mu z(t) - \mu\int_0^1 x y_t(x,t) \dd x,
\end{equation}
since
\begin{equation}\label{eq_Lyap}
\dot V(t) \leq -u(t)\psi(u(t))\leq 0\,.
\end{equation}

\subsection{Convergence result (Steps 4 and 5)}
It is now possible to show that with the boundary controller  \eqref{eq_ControlWave}, global asymptotic stability is obtained.
Note that system \eqref{waveequation}-\eqref{eq_IntAction}-\eqref{eq_ControlWave} can be written in the compact form
\begin{equation}\label{eq_WaveOperator}
    \dot \xi = \cF_\psi \xi ,
\end{equation}
where $\cF_\psi:D(\cF_\psi){\color{blue}\rightarrow }\cX_e$ is defined by
$
    \cF_\psi \!\left(\!\begin{bmatrix}
        a\\b\\c
    \end{bmatrix}\!\right)\! \!=\! \begin{bmatrix}
        b\\a_{xx}\\a(1)
    \end{bmatrix},
$
with the (nonlinear) domain
\begin{multline}\label{eq_DomWaveCL}
    D(\cF_\psi)=\Bigg\{H^2(0,1)\times H^1(0,1)\times \RR:\\ a(0)=0,\ b(0)=0,\\  a_x(1)=\psi\left(-b(1)- \mu c- \mu \int_0^1xb(x)\dd x\right)\Bigg\}.
\end{multline}

\begin{theorem}\label{specwave}
    The origin of the system \eqref{waveequation}-\eqref{eq_IntAction} with the control law \eqref{eq_ControlWave} (or system \eqref{eq_WaveOperator}-\eqref{eq_DomWaveCL}) is globally asymptotically stable in the Hilbert space $\cX_e$.
    More precisely, for each initial condition $\xi_0=(y_0,y_{t0},z_0)$ which belongs to $D(\cF_\psi)$, there exist a unique (strong) solution $t\mapsto \xi(t)=(y(\cdot,t),y_t(\cdot,t),z(t))$ in $C^1(\RR_+;\cX_e)$ and a positive real number $k$ such that
    \begin{equation}
    \|\xi(t)\|_{\cX_e}\leq k\|\xi_0\|_{\cX_e}, 
    \qquad 
\lim_{t\to +\infty} \|\xi(t)\|_{\cX_e}=0.
    \end{equation}
\end{theorem}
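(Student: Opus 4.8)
The plan is to follow the general five-step methodology already laid out in the paper, but now to make it rigorous for the wave equation by mirroring the abstract proof of Theorems~\ref{theoremofpo} and~\ref{theoremofsta}. The essential point is that the closed-loop operator $\cF_\psi$ defined through~\eqref{eq_WaveOperator}-\eqref{eq_DomWaveCL} is $m$-dissipative with respect to the inner product naturally induced by the Lyapunov functional $V$. First I would equip $\cX_e = H^1_0(0,1)\times L^2(0,1)\times\RR$ with the inner product
\begin{equation*}
\left\langle \begin{bmatrix} a_1\\b_1\\c_1\end{bmatrix}, \begin{bmatrix} a_2\\b_2\\c_2\end{bmatrix}\right\rangle_V
= \int_0^1 \! \left(a_{1,x}a_{2,x} + b_1 b_2\right)\dd x + \mu\, (c_1 - M_1)(c_2 - M_2),
\end{equation*}
where $M_i = -\int_0^1 x\, b_i(x)\dd x$. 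I would verify, exactly as in Lemma~\ref{lemma_Coercivity}, that the associated norm is equivalent to $\|\cdot\|_{\cX_e}$; this is the concrete analogue of the coercivity of $\cP$, and it is here that the energy bounds $\underline e, \bar e$ established in Step~1 enter.

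The dissipativity computation (Step~4) is the concrete version of Proposition~\ref{Prop_MDiss}. For $\xi_1,\xi_2\in D(\cF_\psi)$ I would set $\tilde a = a_1-a_2$, etc., and compute $\langle \cF_\psi\xi_1 - \cF_\psi\xi_2,\,\xi_1-\xi_2\rangle_V$. Using integration by parts on the $\int_0^1 \tilde a_{xx}\tilde b\,\dd x$ term, the boundary conditions $\tilde a(0)=\tilde b(0)=0$, and the identity $\ddt M = \dot z - \psi(u)$ derived in Step~2, the whole expression should collapse (as in~\eqref{eq_Lyap}) to $-(\psi(u_1)-\psi(u_2))(u_1-u_2)$, which is nonpositive by the monotonicity Property~\ref{delip3}) of Definition~\ref{def:generalized_sat}. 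The range condition, i.e.\ surjectivity of $\lambda\cI_{\cX_e}-\cF_\psi$, is the main obstacle: one must solve an elliptic boundary-value problem with a nonlinear Robin-type boundary condition at $x=1$. I would reduce the problem, exactly as in~\eqref{exisanduni}-\eqref{solutiontildex}, to a fixed-point equation for the boundary/state data, solve the linear part explicitly (the operator $a\mapsto -a_{xx}$ with $a(0)=0$ is invertible and has compact resolvent on this domain by Rellich's theorem, which supplies the compact-injection hypothesis analogous to Item~\ref{compactofa}) of Assumption~\ref{ass_dissipativity}), and then invoke the Schauder fixed-point theorem to handle the monotone nonlinearity $\psi$. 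Once $m$-dissipativity is in hand, well-posedness and the bound $\|\xi(t)\|_{\cX_e}\le k\|\xi_0\|_{\cX_e}$ follow immediately from the nonlinear semigroup theory cited for Theorem~\ref{theoremofpo}.

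For the convergence part (Step~5) I would replicate the LaSalle argument of Theorem~\ref{theoremofsta}. The compactness of $(\lambda\cF_\psi + \Id)^{-1}$ follows from the compact resolvent of the wave operator together with the Lipschitz property of $\psi$, so the $\omega$-limit set of any strong solution is contained in a compact invariant set on which $\dot V \equiv 0$, hence on which $u(t)\psi(u(t))\equiv 0$, and therefore $u(t)\equiv 0$ by Property~\ref{delip1}). On this set the dynamics reduce to the open-loop conservative wave equation with $\psi(u)=0$, so $y_x(1,t)=0$ for all $t$ and the control expression~\eqref{eq_ControlWave} forces $y_t(1,t) + \mu z(t) + \mu M(t) \equiv 0$. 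I would then show that the only such conservative trajectory is the zero solution: the concrete observability analogue of Lemma~\ref{Lem_Obs} is that an open-loop wave solution cannot keep this quantity constant unless it is identically zero, which follows by differentiating in time, using $\ddt M = \dot z$ (since $\psi(u)=0$) to peel off the integrator, and then using invertibility of the wave generator (no nonzero constant-in-time mode) exactly as in Lemma~\ref{lemmaobser}. The condition $\cC\cA^{-1}\cB\ne 0$ of Assumption~\ref{ass_Obs}, which here reads $m=-1\ne0$ from the Step~2 computation, finally pins down $z_0=0$. Density of $D(\cF_\psi)$ in $\cX_e$ then extends convergence to all mild solutions, completing the proof.
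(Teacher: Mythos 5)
Your overall architecture (energy inner product, dissipativity via integration by parts, $m$-dissipativity, then LaSalle plus a unique-continuation/observability step) is the same as the paper's, and the dissipativity and convergence parts of your plan essentially coincide with the proof given there. The genuine problem is in your treatment of the range condition. You propose to ``reduce the problem, exactly as in \eqref{exisanduni}--\eqref{solutiontildex}, to a fixed-point equation \dots and then invoke the Schauder fixed-point theorem.'' But the reduction \eqref{solutiontildex} writes the resolvent equation as $\tilde x=(\lambda\cI_{\cX}-\cA)^{-1}[\cB\psi(\phi(\tilde x))+x]$, which requires $\cB$ to be a \emph{bounded} input operator so that $\cB\psi(\cdot)$ appears as an additive forcing term. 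For the wave system the nonlinearity does not enter as a forcing term at all: it sits in the boundary condition $a_x(1)=\psi(\mathfrak u)$, i.e.\ in the nonlinear domain \eqref{eq_DomWaveCL}. That is precisely why Section~\ref{sec:general} cannot simply invoke Theorem~\ref{theoremofpo}, and your plan does not explain how the abstract reduction survives this. Moreover, even granting a correct reduction, Schauder requires a bounded convex set mapped into itself; since Definition~\ref{def:generalized_sat} only makes $\psi$ Lipschitz (not bounded), such an invariant set is not free, and you do not say where it comes from.

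What the paper actually does for the range condition is more elementary and avoids both issues: it integrates the second-order ODE $a_{xx}-\lambda^2 a=\bar b+\lambda\bar a$ explicitly via the matrix exponential $\exp(Sx)$ with $a(0)=0$, so that both $a_x(1)$ and the control value $\mathfrak u$ become affine functions of the single remaining unknown $r=a_x(0)$, namely $a_x(1)=\alpha_1 r+\beta_1$ and $\mathfrak u=\alpha_0 r+\beta_0$. One then checks the signs $\alpha_0<0$ and $\alpha_1>0$, so that by monotonicity of $\psi$ the scalar map $r\mapsto \alpha_1 r+\beta_1-\psi(\alpha_0 r+\beta_0)$ tends to $\pm\infty$ as $r\to\pm\infty$ and the intermediate value theorem produces the required $a_x(0)$; no compactness and no bound on $\psi$ is needed. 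If you want to keep a fixed-point flavour, you must first reduce to this one-dimensional equation: the sign information on $\alpha_0,\alpha_1$ is the substitute for the compactness you were hoping to extract from Rellich and Schauder. A last, smaller point: in Step~5 the paper also needs the boundedness of $z(t)$ coming from \eqref{eq_CV} to exclude the affine-in-time mode $y_t(1,t)=\mathrm{const}\neq 0$ before invoking the over-determined wave system with $y_x(1,\cdot)=y_t(1,\cdot)=0$; make sure your ``differentiate and peel off the integrator'' step actually contains that argument rather than only the invertibility of the generator.
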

\begin{proof}
\noindent \textbf{$\cF_\psi$ is dissipative in a particular Hilbert space}.
Consider the inner product~:
\begin{multline}
\langle \xi_1,\xi_2\rangle_V = \int_0^1 a_{1x}(x)a_{2x}(x) + b_1(x)b_2(x) \dd x + \\
\mu\left(c_1+\int_0^1 xb_1(x)\dd x\right)
\left(c_2+\int_0^1 xb_2(x)\dd x\right).
\end{multline}
It can be checked that for all $(\xi_1,\xi_2)$ in $D(\cF_\psi)^2$
\begin{multline}
\langle \xi_1-\xi_2,\cF_\psi\xi_1-\cF_\psi\xi_2\rangle_V = \int_0^1 \tilde a_{x}(x)\tilde b_{x}(x) + \tilde b(x)\tilde a_{xx}(x) \dd x + \\
 \mu  \left(\tilde c+\int_0^1 x\tilde b(x)\dd x\right)
\left(\tilde a(1)+\int_0^1 x\tilde a_{xx}(x)\dd x\right),
\end{multline}
where $\tilde a = a_1-a_2$, $\tilde b=b_1-b_2$ and $\tilde c = c_1-c_2$.
With an integration by parts, it yields,
\begin{multline}
\langle \xi_1-\xi_2,\cF_\psi\xi_1-\cF_\psi\xi_2\rangle_V = [\tilde a_x(x)\tilde b(x)]_0^1 + \\
\mu \left(\tilde c+\int_0^1 x\tilde b(x)\dd x\right)
\left(\tilde a(1)-\tilde a(1)+\tilde a(0) +[x\tilde a_{x}(x)]_0^1\right).
\end{multline}
Hence,
\begin{multline}
\langle \xi_1-\xi_2,\cF_\psi\xi_1-\cF_\psi\xi_2\rangle_V \\= \tilde a_x(1)\left[\tilde b(1)+\mu \tilde c+\mu \int_0^1x\tilde b(x)\dd x\right] ,
\end{multline}
which gives,
\begin{multline}
\langle \xi_1-\xi_2,\cF_\psi\xi_1-\cF_\psi\xi_2\rangle_V \\= (\psi(u_1)-\psi(u_2))(u_2-u_1)\leq 0 .
\end{multline}
Hence $\cF_\psi$ is a dissipative operator in Hilbert space induced by the scalar product $V$.

\noindent \textbf{About the range condition.}
Let $(\bar a, \bar b, \bar c)$ be in $\cX_e= H^1_0(0,1)\times L^2(0,1)\times\RR $. We are looking for a triplet $(a,b,c)$ in $D(\cF_\psi)$ such that
$$
\cF_\psi \!\left(\!\begin{bmatrix}
        a\\b\\c
    \end{bmatrix}\!\right)\! - \lambda \begin{bmatrix}a\\b\\c \end{bmatrix}= \begin{bmatrix}
    \bar a\\\bar b\\\bar c
\end{bmatrix} ,
$$
for all $\lambda> 0$.
More precisely, we are looking for $(a,b,c)$ in $H^2(0,1)\times H^1(0,1)\times \RR$ solution to the following equations
\begin{align}\label{eq_cond1}
        b-\lambda a=\bar a, \ a_{xx}-\lambda b=\bar b, \ 
        a(1) -\lambda c =\bar c, \\ \label{eq_cond2}
        a(0)=0 , \ b(0)=0, \
        a_x(1)=\psi(\mathfrak u),
\end{align}
where 
$$
\mathfrak u=-b(1)-\mu c-\mu\int_0^1xb(x)\dd x.
$$
This gives
\begin{equation}\label{EDO_a}
a_{xx} - \lambda^2 a =\bar b + \lambda \bar a, 
\end{equation}
and
\begin{multline}
\mathfrak u=-\frac{\mu+\lambda^2 }{\lambda}a(1)-\mu \lambda \int_0^1xa(x)\dd x\\ +\frac{\mu }{\lambda}\bar c-\bar a(1)-\mu \int_0^1x\bar a(x)\dd x.
\end{multline}
By integrating \eqref{EDO_a} with $a(0)=0$ in \eqref{eq_cond2}, it yields
\begin{multline}\label{eq_a}
\begin{bmatrix}
a(x)\\
a_x(x)
\end{bmatrix}  = \exp(Sx)\begin{bmatrix}0\\a_x(0)\end{bmatrix}
\\+\int_0^x \exp(S(x-s))\begin{bmatrix}
    0\\
    \bar b(s)+\lambda \bar a(s)
\end{bmatrix}\dd s ,
\end{multline}
where
$$
S = \begin{bmatrix}
    0&1\\
    \lambda^2& 0
\end{bmatrix}.
$$
Consequently,
$$
\mathfrak u = \alpha_0 a_x(0) + \beta_0 , \ a_x(1) = \alpha_1 a_x(0) + \beta_1,
$$
for some real numbers $(\beta_0,\beta_1)$, which depends on $(\lambda, \bar a,\bar b, \bar c)$ and
where
\begin{align*}
\alpha_0 &= \begin{bmatrix}1&0\end{bmatrix}
\left[-\frac{\mu+\lambda^2}{\lambda} \exp(S)
- \mu \lambda \int_0^1 x \exp(Sx)\dd x\right]\begin{bmatrix}0\\1\end{bmatrix},\\
\alpha_1 &= \begin{bmatrix}0&1\end{bmatrix}\exp(S)\begin{bmatrix}0\\1\end{bmatrix}.
\end{align*}
With an integration by parts, it yields
\begin{multline*}
\alpha_0 =  \begin{bmatrix}1&0\end{bmatrix}
\Big[\left(-\frac{\mu +\lambda^2}{\lambda} I + {\lambda\mu }{S^{-2}} -{\lambda\mu}{S^{-1}} \right) \exp(S)  \\
- {\lambda \mu}{S^{-2}}\Big]\begin{bmatrix}0\\1\end{bmatrix}.
\end{multline*}
Note that since $S^2=\lambda^2 \Id$, it yields
\begin{equation}
\alpha_0 =  \begin{bmatrix}1&0\end{bmatrix}
\left[\left(-\lambda I -{\lambda  \mu} {S^{-1}}\right) \exp(S)
- \frac{ \mu}{ \lambda }\right]\begin{bmatrix}0\\1\end{bmatrix}.
\end{equation}
Note that it can be checked that $\alpha_0<0$ and $\alpha_1>0$ for any $\lambda>0$. 
The function $\psi$ being monotonic and odd, it yields 
\begin{align*}
\lim_{r\to+\infty} \alpha_1 r +\beta_1 - \psi(\alpha_0 r + \beta_0)&=+\infty,\\
\lim_{r\to-\infty} \alpha_1 r +\beta_1 - \psi(\alpha_0 r + \beta_0)&=-\infty.
\end{align*}
Hence, for any $(\bar a, \bar b, \bar c)$  and $\lambda>0$, there always exists $a_x(0)$ such that $a_x(1) = \psi(\mathfrak u)$. Hence, the function $a$ obtained from \eqref{eq_a}, and $(b,c)=(\bar a + \lambda a,\frac{a(1)-\bar c}{\lambda})$ satisfy \eqref{eq_cond1} and \eqref{eq_cond2} and the triple $(a, b, c)$ belongs to $D(\cF_\psi)$. Consequently, the range condition holds.

Since the operator is $m$-dissipative, with \cite[Theorem 3.1, page 54]{brezis1973ope} there exists a semi-group of contraction $t\mapsto e^{\cF_\psi t}$ such that for each $\xi$ in $D(\cF_\psi)$, $\xi(t)=e^{\cF_\psi t}\xi$ is a solution to \eqref{eq_WaveOperator} which belongs to $C^1(\RR_+;\cX_e)$ and such that
\begin{equation}\label{eq_CV}
\|e^{\cF_\psi t}\xi\|_V \leq \|\xi\|_V,\qquad 
  \|\cF_\psi e^{\cF_\psi t}\xi\|_V \leq \|\xi\|_V.
\end{equation}

\noindent \textbf{About the observability condition.}
The next step of the procedure is to look at the observability condition.
Since $V$ is a Lyapunov functional which satisfies equation \eqref{eq_Lyap},
we need to show that  $e^{\cF_\psi t}\xi=0$ is the only solution which satisfies
\begin{equation}
0=u(t)=-y_t(1,t)- \mu z(t)- \mu \int_0^1 x y_t(x,t)\dd x, \ \forall t\geq 0.
\end{equation}
But looking at the time derivative of the former expression, this implies
$$
 z^{(3)}(t)= y_{tt}(1,t)=0. 
$$
But since with \eqref{eq_CV}, $z(t)$ is bounded, it implies $y_t(1,t)=0$ for all $t\geq 0$.

So we are looking for the solution to
\begin{equation*}
\left\{
\begin{aligned}
y_{tt}(x,t)  &= y_{xx}(x,t), && (t,x) \in \RR_+ \times [0,1], \\
y(0,t)& =0,  && t \in \RR_+, \\
y_{x}(1,t)&=0, && t \in \RR_+,\\
y_t(1,t)&=0,  && t \in \RR_+,\\
y(x,0)&=y_0(x), y_t(x, 0) = y_{t0}(x), &&x\in [0,1],
\end{aligned}
\right.
\end{equation*}
which we know is only $0$. This implies that $z(t)=0$ for all $t\geq0$.

\noindent \textbf{Convergence.}
The right hand side of \eqref{eq_CV} implies  
the existence of $\mathfrak{c}>0$ such that
$$
\|y_{t}(\cdot,t)\|_{L^2(0,1)} + \|y_{xx}(\cdot,t)\|_{L^2(0,1)}+|z(t)|<\mathfrak{c}.
$$
This implies precompactness of the trajectory and allows one to conclude global asymptotic stability employing standard argument (see \cite{slemrod1989feedback}).
\end{proof}

\subsection{Simulation results}
\begin{figure}
  \centering
  \begin{subfigure}{4cm}
    \includegraphics[width=4cm]{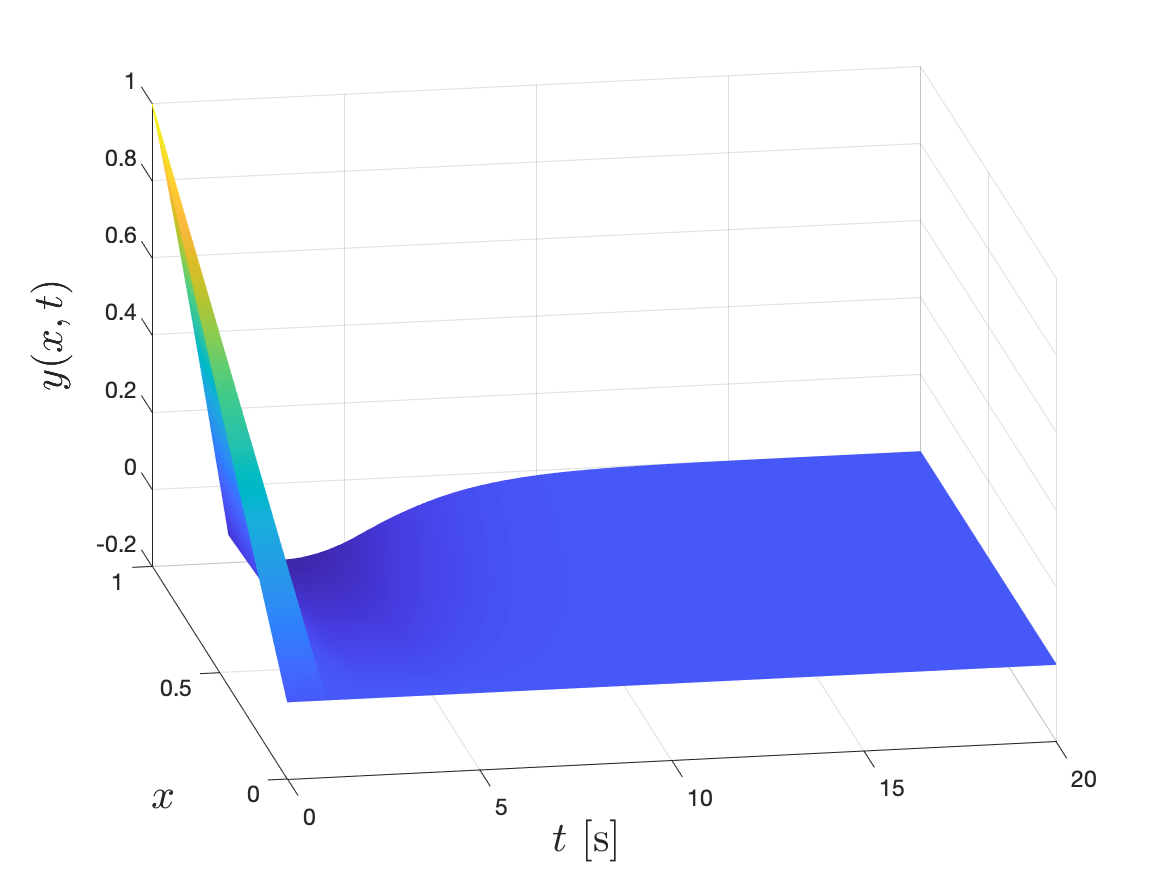}
    \caption{Solution $y(x,t)$.}\label{yxt}
  \end{subfigure}
   \begin{subfigure}{4cm}
     \includegraphics[width=4cm]{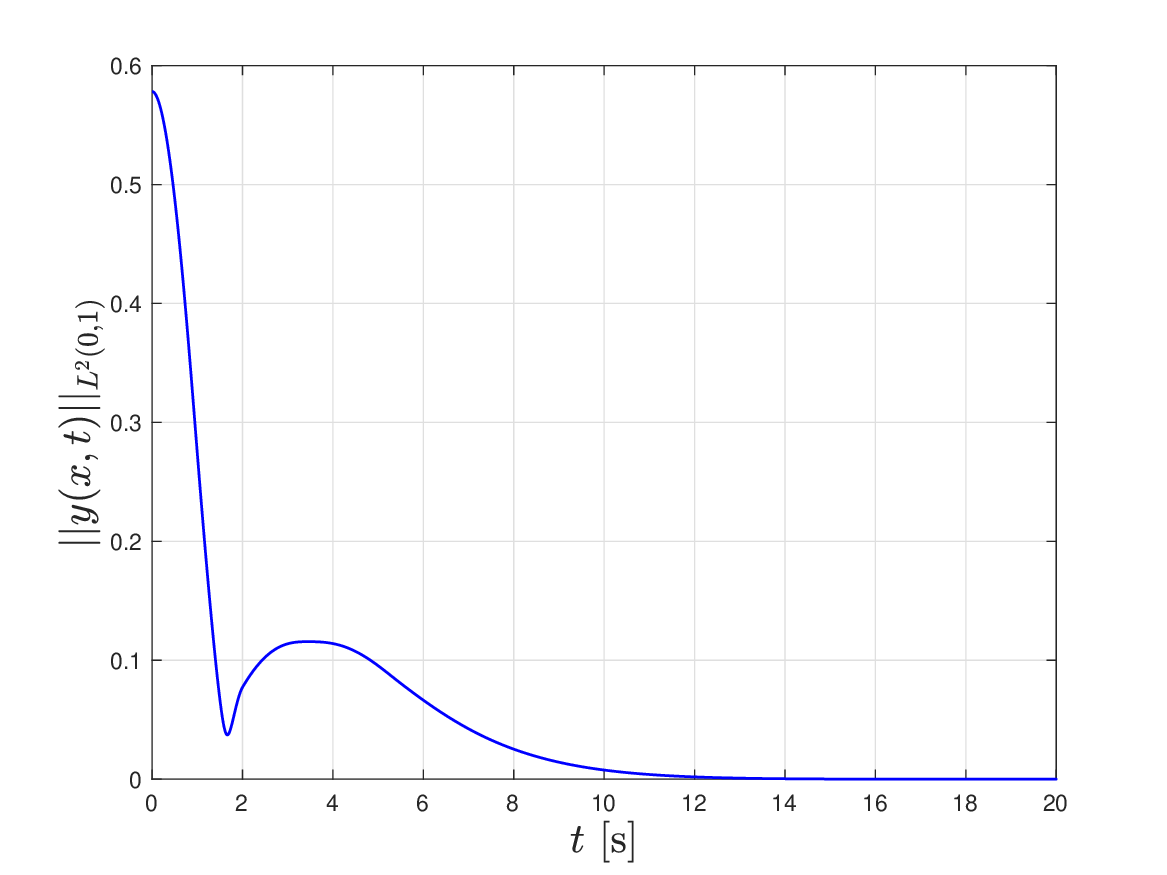}
     \caption{$\|y(x,t) \|_{L^2(0,1)}$.}\label{ynorm}
  \end{subfigure}
    \begin{subfigure}{4cm}
      \includegraphics[width=4cm]{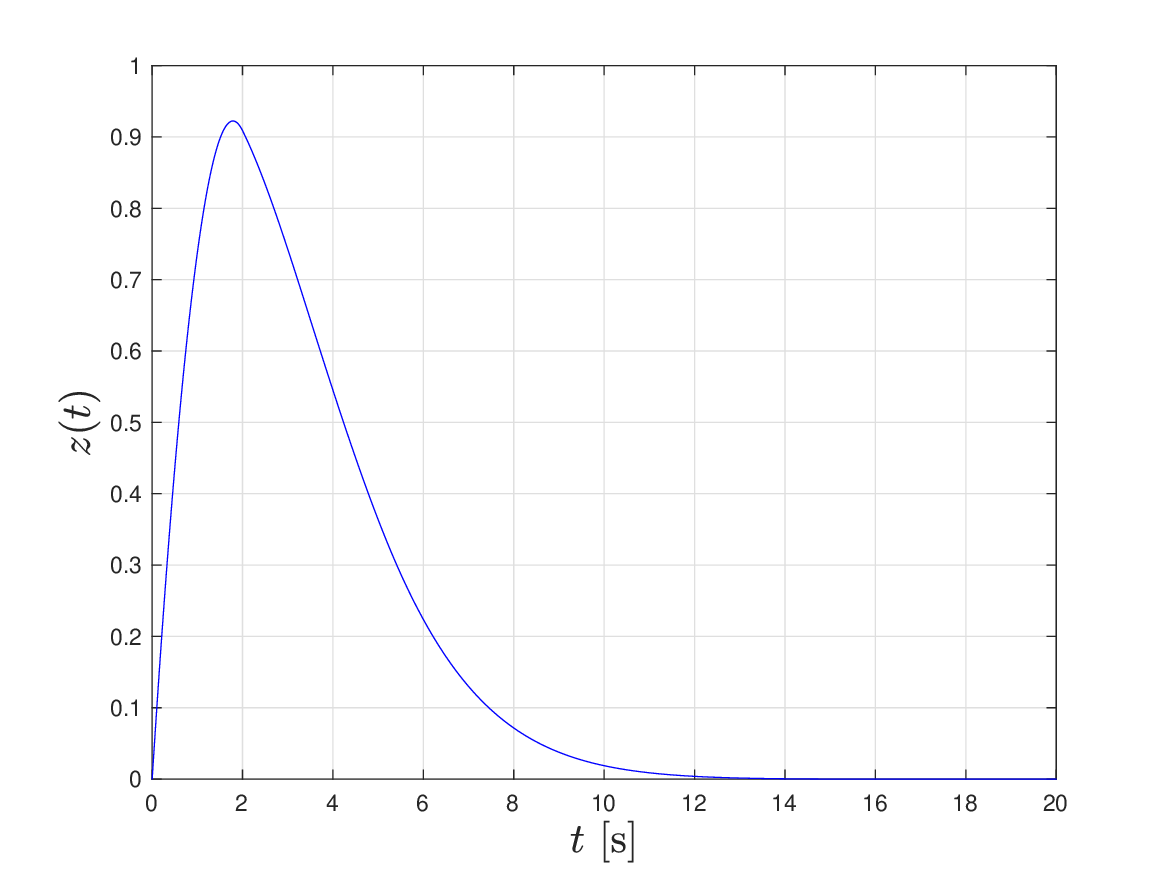}
      \caption{Evolution of $z(t)$.}\label{zt}
    \end{subfigure}
     \begin{subfigure}{4cm}
       \includegraphics[width=4cm]{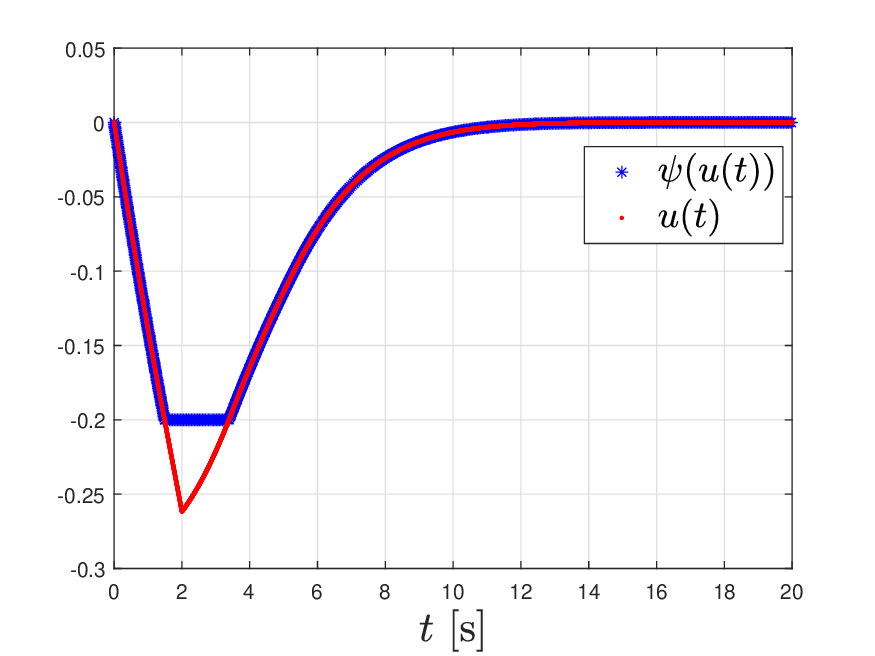}
       \caption{$u(t)$ (red) and $\psi(u(t))$ (blue).}\label{ut}
     \end{subfigure}
  \caption{The transient dynamics of wave equation \eqref{waveequation}-\eqref{eq_IntAction} under the controller \eqref{eq_ControlWave}.}
\end{figure}

To assess the effectiveness of the proposed control law \eqref{eq_ControlWave}, we present the simulation results below. We consider the initial condition $y_0(x)=x, y_{t0}(x)=0$ and $z_0 =0$ in \eqref{waveequation} and \eqref{eq_IntAction}. The controller parameter $\mu=0.3$ in \eqref{eq_ControlWave}. The nonlinear function $\psi$ is a saturation function of level $L =0.2$. 
The numerical simulations are conducted using a finite difference method with a step size of $0.002$ for both space and time.
 
Fig. \ref{yxt} and \ref{ynorm} depict the temporal evolution of the solution $y(x, t)$. Fig. \ref{ynorm} specifically represents the norm of $y(x, t)$. Moreover, Fig. \ref{zt} shows the convergence of $z(t)$ to zero. These visualizations provide evidence of the solution's convergence to its trivial equilibrium state. It emphasizes the asymptotic stability assessed in Theorem \ref{specwave}, i.e., an extension of Theorem \ref{theoremofsta} in the case of the wave equation with boundary control. Lastly, Fig. \ref{ut} illustrates the evolution of $u(t)$ and $\psi(u(t))$ under the influence of the saturation with $L = 0.2$. Despite activation of the saturation between about $1.5$~s and $3.4$~s, stabilization is achieved.

We conduct a series of simulations, as illustrated in Fig. \ref{fig:comparison_mu}, to  evaluate the impact of different controller parameter $\mu$,  The results presented in Fig. \ref{ztmu} reveal that higher values of $\mu$ induce apparent undershooting in the state $z(t)$. Moreover, in Fig. \ref{ytnormmu}, oscillations in the norm of the state $y(x,t)$ are observed with increasing $\mu$, particularly noticeable when $\mu=0.6$ in the time interval about $t=7-12$ s, exhibiting more pronounced vibrational behavior compared to other scenarios. Examining Fig. \ref{utmu}, it is evident that an increase in $\mu$ is associated with a rise in the control force $u(t)$ as defined in \eqref{eq_ControlWave} at $t=2$ s. Additionally, as depicted in Fig. \ref{satutmu}, larger values of $\mu$ result in greater values of $\psi(u(t))$. These results emphasize the diverse effects of $\mu$ on both state responses and control behavior. 

\begin{figure}
  \centering
  \begin{subfigure}{4cm}
    \includegraphics[width=4cm]{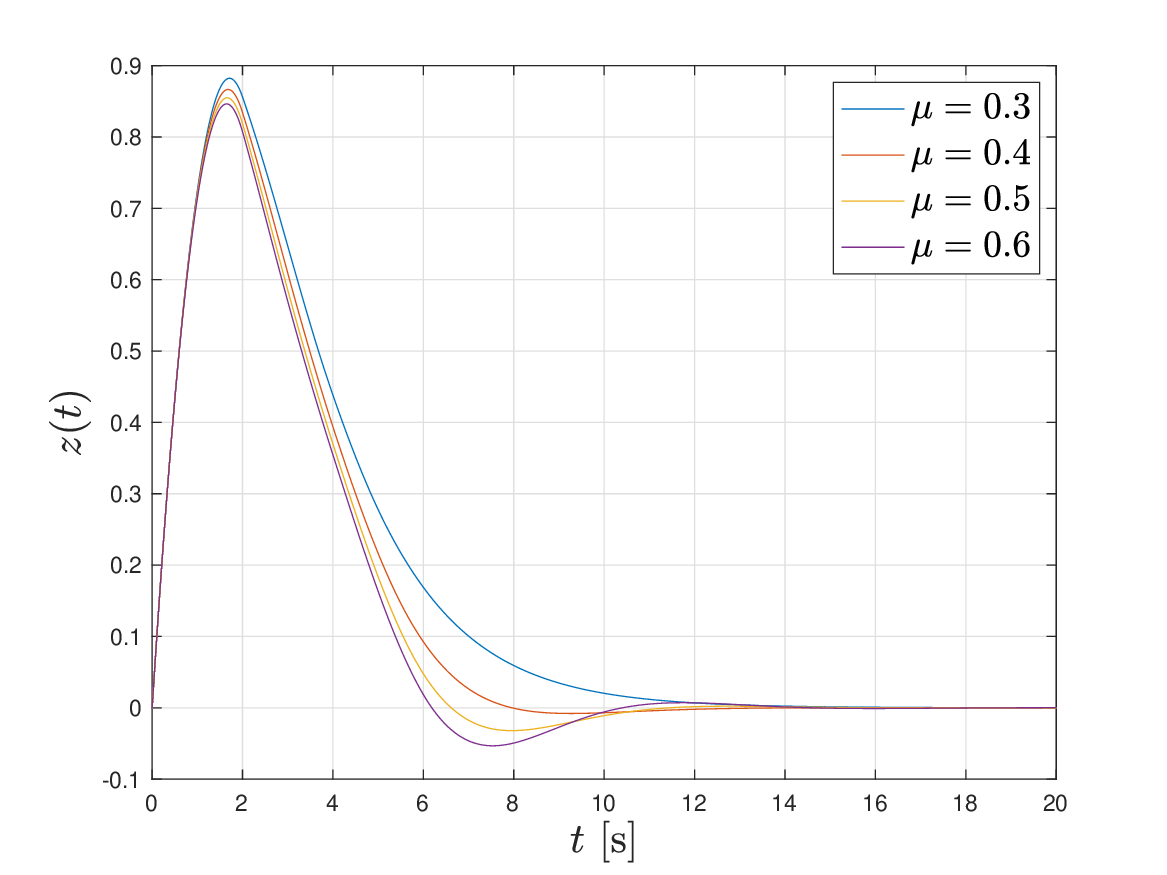}
    \caption{Evolution of $z(t)$.}\label{ztmu}
  \end{subfigure}
   \begin{subfigure}{4cm}
     \includegraphics[width=4cm]{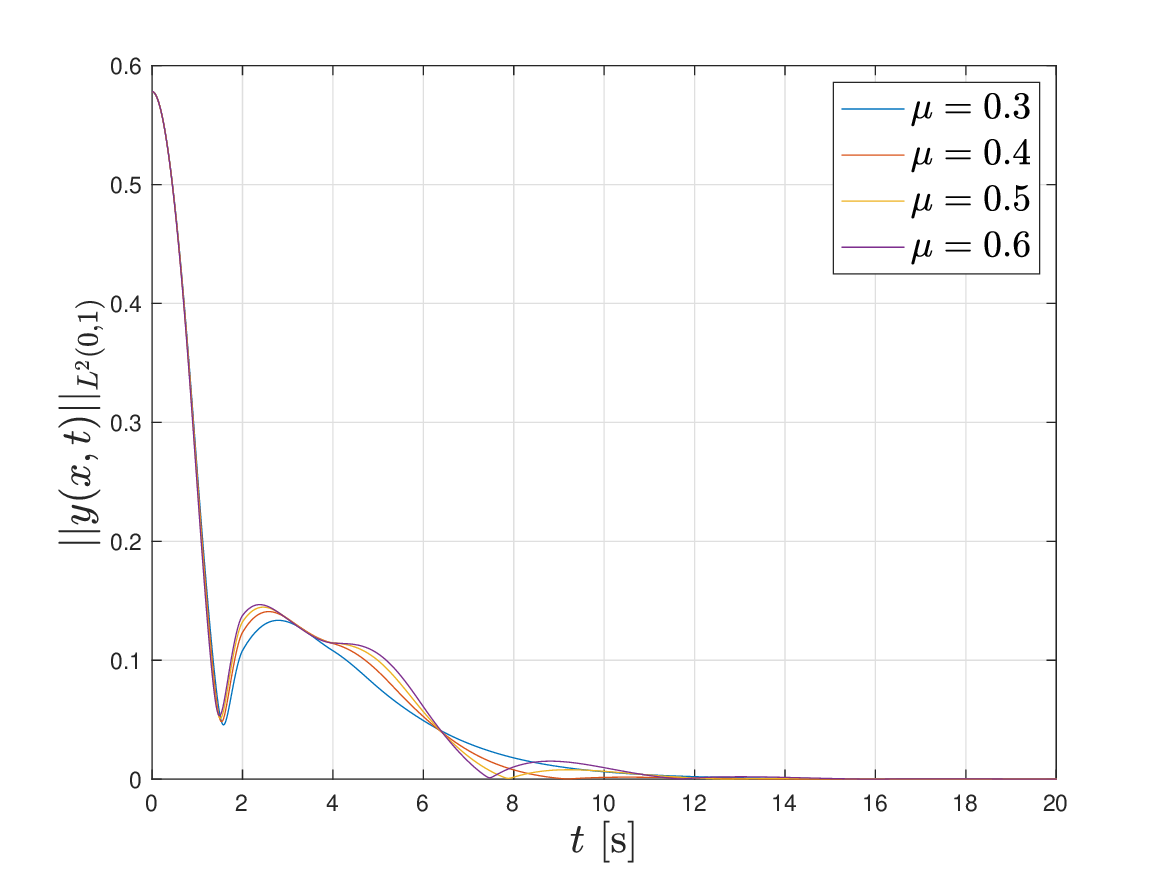}
     \caption{$\|y(x,t) \|_{L^2(0,1)}$.}\label{ytnormmu}
  \end{subfigure}
    \begin{subfigure}{4cm}
      \includegraphics[width=4cm]{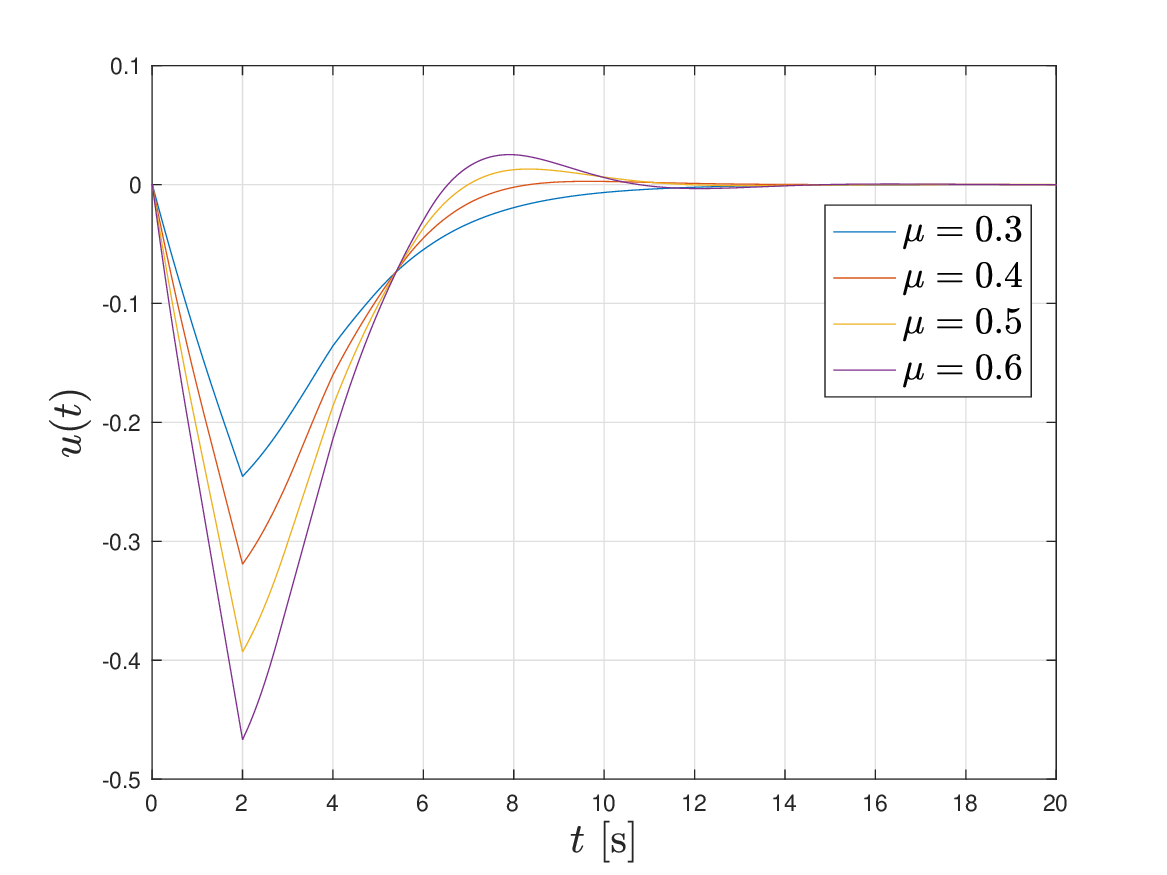}
      \caption{Evolution of $u(t)$.}\label{utmu}
    \end{subfigure}
     \begin{subfigure}{4cm}
       \includegraphics[width=4cm]{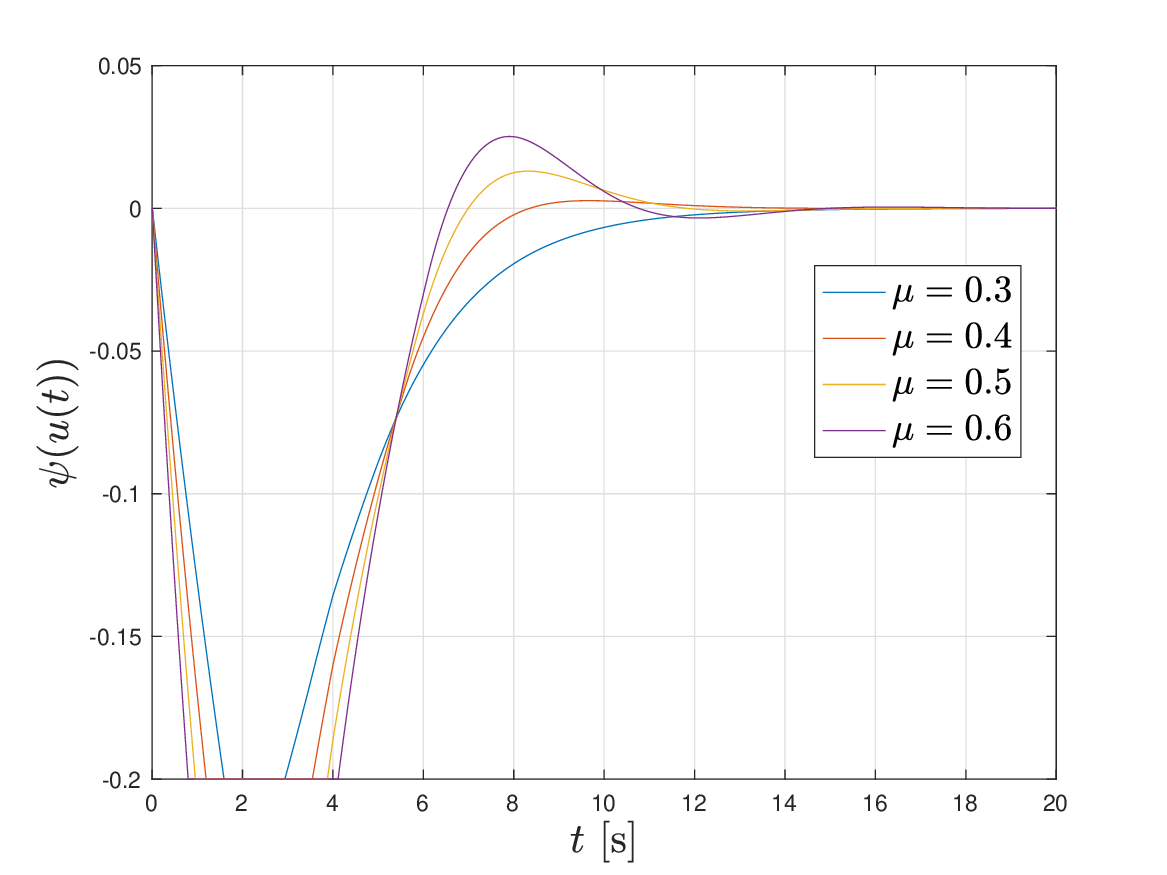}
       \caption{Evolution of $\psi(u(t))$.}\label{satutmu}
     \end{subfigure}
  \caption{Comparison results with different $\mu$.}
  \label{fig:comparison_mu}
\end{figure}

\section{Conclusion}
In this article, we demonstrated how it is possible to design an integral action for a partial differential equation system subject to nonlinearity of the actuator. This approach is made feasible through the use of weak Lyapunov functionals and forwarding-based approaches, revealing an approximate observability equation. The major challenge lies in the fact that, in this case, the distributed parameter system is not assumed to be exponentially stable in open-loop. This implies that a simple proportional control cannot solve the problem. The use of a more complex control, albeit at the cost of assuming the invertibility of the infinite-dimensional operator, allows us to address the issue.

As  future perspectives, it would be interesting to study 
new stabilization feedback in the presence of 
of multiple 
integral actions and input nonlinearities following 
the so-called nested-saturation approach
\cite{teel1992global,sussmann1993general}, and
possibly for unstable abstract systems. 
In this second case, similarly to the finite-dimensional case \cite{tarbouriech2011stability}, 
we believe that 
regional results and a detailed characterization of the domain of attraction should be carefully studied.


\bibliographystyle{IEEEtran}
\bibliography{bibsm}

\end{document}